\documentclass{amsart}
\usepackage{a4wide}
\usepackage[latin1]{inputenc}
\usepackage{amssymb,amsfonts,amsmath,amsthm,amscd, epsfig}

\usepackage[all]{xy}

\theoremstyle{plain}
\newtheorem{teo}{Theorem}[section]

\newtheorem{lema}[teo]{Lemma}
\newtheorem{cor}[teo]{Corollary}
\newtheorem{prop}[teo]{Proposition}
\newtheorem{ex}[teo]{Example}
\newtheorem{exes}[teo]{Examples}

\newcommand{\nd}{\noindent}
\newcommand{\vu}{\vspace{.1cm}}
\newcommand{\vd}{\vspace{.2cm}}
\newcommand{\vt}{\vspace{.3cm}}
\newcommand{\vf}{\vspace{.4cm}}

\begin{document}

\title{A Galois-Grothendieck-type correspondence for groupoid actions}

\author{Antonio Paques}
\address{Instituto de Matem\'atica, Universidade Federal do Rio Grande do Sul,
91509-900, Porto Alegre, RS, Brazil} \email{paques@mat.ufrgs.br}
\author{Thaísa Tamusiunas}
\email{trtamusiunas@yahoo.com.br}

\maketitle

\begin{abstract}
In this paper we present a Galois-Grothendieck-type correspondence for
groupoid actions. As an application a Galois-type correspondence is also given.
\end{abstract}

\

{\scriptsize{\it Key words and phrases:} groupoid action, $G$-set,
Galois-Grothendieck equivalence, Galois correspondence}

{\scriptsize{\it MSC 2010:} Primary 13B02, 13B05, 16H05, 18B40}

\section{Introduction}

S. U. Chase, D. K. Harrison and A. Rosenberg developed in \cite{chase} a Galois theory for commutative ring extensions
$R\supset K$ under the assumption that $R$ is a strongly separable $K$-algebra and the elements of the Galois group $G$
are pairwise strongly distinct $K$-automorphisms of $R$  . Among the main results of that paper, Theorem 2.3 states
a one-to-one correspondence between the subgroups of the group $G$ and the $K$-subalgebras of $R$ which are
separable and $G$-strong.

\vu

The Galois theory due to Grothendieck, in its total generality, is contextualized in the language of schemes
(see \cite{grothendieck}). A version of this theory in the specific context of fields has been presented by A. Dress
in \cite{dress} (see also \cite{BJ}). Dress showed that a simplification of the Galois theory for groups acting on fields
is possible by combining Dedekind's lemma with some elementary facts on $G$-sets, in the case that $G$ is a group.

\vu

Dedekind's lemma states that for a field extension $L$ of a field $K$ the set $Alg_K(A,L)$ of all $K$-algebra homomorphisms
of a $K$-algebra $A$ into $L$ is a linearly independent subset of the $L$-vector space $Hom_K(A,L)$. It turns out that
strongly distinct algebra homomorphisms of separable algebras are a kind of homomorphisms which satisfy a version of Dedekind's lemma.
In \cite{paques}, M. Ferrero and the first author showed that the same approach used by Dress can be adopted in Galois theory for
groups acting on commutative rings, and, as a natural sequel of this method, they obtained some new results.

\vu

The goal of this paper is to develop a Galois theory for groupoids acting on commutative rings using the original viewpoints
of Grothendieck and Dress. We start by introducing a new version of Dedekind's lemma (section 2) we will need for our purposes, and standard
notions and basic facts concerning to groupoid actions on sets and algebras (section 3).
The Galois-Grothendieck-type correspondence for an action $\beta$ of a groupoid $G$ on a $K$-algebra $R$, given in the section 4, establishes an
equivalence between the category of all finite $G$-split sets and the category of all $R$-split $K$-algebras, under the assumption that
$R$ is a $\beta$-Galois extension of $K$. As an application of this result we present in the section 5 a generalization of the Galois-type correspondence given
by Chase, Harrison and Rosenberg in \cite{chase}.

\vu

Throughout, $K$ is a fixed commutative ring with identity  and algebras over $K$ are always commutative and unital. Ring homomorphisms
are assumed to be unitary, and unadorned $\otimes$ means $\otimes_K$.

\vu


\section{Dedekind's Lemma revisited}

We start by recalling that a $K$-algebra $R$ is said to be \emph{separable} if $R$ is a projective $R\otimes R$-module. This
is equivalent to the existence of an element $\upsilon=\sum_ix_i\otimes y_i\in R\otimes R$, which turns out to be an idempotent, unique
such that $\sum_ix_iy_i=1_R$ and $r\upsilon=\upsilon r$, for every $r\in R$.  If, in addition, $R$ is projective and finitely generated as
a $K$-module, we say that $R$ is a {\it strongly separable} $K$-algebra, or, if $R$ is also faithful over $K$,
a {\it strongly separable extension} of $K$. Any faithful, projetive and finitely generated $K$-module is called
\emph{faithfully projective}.

\vu

Let $f, g: T \longrightarrow S$ be ring homomorphisms. We say that $f$ and $g$ are \emph{strongly distinct} if,
for every nonzero idempotent $\pi \in S$, there exists $x \in T$ such that $f(x)\pi \neq g(x)\pi$.

\vu

\begin{lema}\emph{\cite[Lemma 1.2]{paques}} \label{12} Let $T$ be a separable $K$-algebra, and $f: T \to K$
a $T$-algebra homomorphism. Then, there exists a unique idempotent $\pi \in T$ such that $f(\pi) = 1$ and $x\pi = f(x)\pi$, for all $x \in T$.
Furthermore, if $\{f_j\ |\ j\in J\}$ is a nonempty set of pairwise strongly distinct $K$-algebra homomorphisms from $T$ into $K$, then the corresponding
idempotents $\pi_j, j\in J,$ are pairwise orthogonal and $f_i(\pi_j) = \delta_{ij}1_K$, for all $i,j\in J$.

\end{lema}

The next results are slight extensions of similar results given in \cite[Section 2]{paques}.

\vu

\begin{prop}\label{livre} Suppose that $T$ and $R$ are $K$-algebras with $T$ separable over $K$, and $V$ is a nonempty set of
homomorphisms of $K$-algebras $v:T \longrightarrow E_v$, where $E_v = R1_v$ and $\{1_v\}_{v \in V}$ is a
set of nonzero idempotents of $R$. Then, the following statements are equivalent:
\begin{itemize}

\vu

\item [(i)] For each $v \in V$, the elements of $V_v = \{u \in V |$ $1_u = 1_v\}$ are pairwise strongly distinct.

\vu

\item [(ii)] For each $u \in V_v$ there exist $x_{iu} \in E_v$, $y_{iu} \in T$, $1 \leq i \leq m_{u}$, such that
$\sum_{i = 1}^{m_{u}}x_{iu}u'(y_{iu}) = \delta_{u,u'}1_v$, for every $u' \in V_v$.

\vu

\item [(iii)] For each $v \in V$, $V_v$ is free over $E_v$ in $Hom_K(T, E_v)$.
\end{itemize}
\end{prop}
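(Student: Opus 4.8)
The plan is to prove the three statements equivalent through the cyclic chain $(i) \Rightarrow (ii) \Rightarrow (iii) \Rightarrow (i)$, working with a single fixed $v \in V$ throughout, since each condition is quantified over $v$ and the set $V_v$ depends only on the idempotent $1_v$. The engine of the whole argument is Lemma \ref{12}, which I would apply not over $K$ but over the base ring $E_v = R1_v$, after extending scalars. The two preliminary observations I would record are that separability is stable under base change, so that $T_v := T \otimes E_v$ is a separable $E_v$-algebra, and that each $K$-algebra homomorphism $u: T \to E_v$ in $V_v$ corresponds to the $E_v$-algebra homomorphism $\widetilde{u}: T_v \to E_v$ given by $\widetilde{u}(t \otimes e) = u(t)e$.

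For $(i) \Rightarrow (ii)$, which I expect to be the main obstacle, I would first check that $u \mapsto \widetilde{u}$ preserves strong distinctness: if $u, u'$ are strongly distinct, then for each nonzero idempotent $\pi \in E_v$ one may pick $x \in T$ with $u(x)\pi \neq u'(x)\pi$, and the element $t = x \otimes 1_v$ witnesses that $\widetilde{u}$ and $\widetilde{u}'$ are strongly distinct. Thus, under $(i)$, the family $\{\widetilde{u}\}_{u \in V_v}$ consists of pairwise strongly distinct $E_v$-algebra homomorphisms from the separable $E_v$-algebra $T_v$ into $E_v$, and Lemma \ref{12}, read with $E_v$ in place of $K$, produces pairwise orthogonal idempotents $\pi_u \in T_v$ with $\widetilde{u'}(\pi_u) = \delta_{u,u'}1_v$. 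Writing $\pi_u = \sum_{i=1}^{m_u} y_{iu} \otimes x_{iu}$ with $y_{iu} \in T$ and $x_{iu} \in E_v$ and evaluating gives $\sum_i x_{iu} u'(y_{iu}) = \widetilde{u'}(\pi_u) = \delta_{u,u'}1_v$, which is precisely $(ii)$.

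The remaining implications are formal. For $(ii) \Rightarrow (iii)$ I would take an $E_v$-linear relation $\sum_{u \in V_v} a_u u = 0$ in $Hom_K(T, E_v)$, evaluate it at $t = y_{iu_0}$ for a fixed $u_0$, multiply by $x_{iu_0}$, and sum over $i$; the dual-basis identity in $(ii)$ collapses the sum to $a_{u_0}1_v = a_{u_0}$, forcing every coefficient to vanish, so $V_v$ is free over $E_v$. For $(iii) \Rightarrow (i)$ I would argue contrapositively: if two distinct elements $u, u' \in V_v$ fail to be strongly distinct, there is a nonzero idempotent $\pi \in E_v$ with $u(x)\pi = u'(x)\pi$ for all $x \in T$, that is, $\pi u - \pi u' = 0$ in $Hom_K(T, E_v)$, a nontrivial dependence relation (since $\pi \neq 0$ and $u \neq u'$), contradicting freeness.

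The delicate point is thus organizing $(i) \Rightarrow (ii)$ so that Lemma \ref{12} can be invoked verbatim over $E_v$; once the base-change formalism is in place (stability of separability, the $u \leftrightarrow \widetilde{u}$ dictionary, and the transfer of strong distinctness), the orthogonal idempotents it furnishes unwind directly into the required $x_{iu}$ and $y_{iu}$, while the other two implications reduce to short module-theoretic computations.
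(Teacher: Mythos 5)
Your proposal is correct and follows essentially the same route as the paper: both prove (i)~$\Rightarrow$~(ii) by base change, applying Lemma~\ref{12} over $E_v$ to the separable $E_v$-algebra $E_v \otimes T$ and the induced homomorphisms $x \otimes y \mapsto xu(y)$, and both prove (ii)~$\Rightarrow$~(iii) by the same dual-basis evaluation that collapses a linear relation to its individual coefficients. The only difference is one of detail, not of method: you verify explicitly that strong distinctness transfers to the extended maps and you write out the contrapositive for (iii)~$\Rightarrow$~(i), two steps the paper asserts without proof ("pairwise strongly distinct homomorphisms" and "Immediate", respectively).
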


\begin{proof} (i) $\Rightarrow$ (ii) Since $T$ is separable over $K$, for each $v \in V$, $E_v \otimes T$ is
separable over $E_v$. Also, for all $u\in V_v$ the mappings
$$
\begin{array}{cccc}
f_u \ : & \! E_v \otimes T & \! \longrightarrow
& \! E_v, \\
& \! x \otimes y & \! \longmapsto
& \! xu(y)
\end{array}
$$

\nd are pairwise strongly distinct homomorphisms. Then, by  Lemma \ref{12}, there exists
$\pi_u = \sum_{i = 1}^{m_v}x_{iu} \otimes y_{iu} \in E_v \otimes T$ such that $f_{u'}(\pi_u) = \delta_{u,u'}1_v$,
for every $u, u' \in V_v$, and (ii) follows.

\vd

(ii) $\Rightarrow$ (iii) Assume that $V_v'$ is a finite subset of $V_v$ and $\sum_{u'\in V_v'}r_{u'}u' = 0$ in $Hom_K(T, E_v)$,
where $r_{u'} \in E_{u'} = E_v$. Hence, for $u \in V_v'$, we have
\begin{center}
$r_u = (\sum_{u' \in V_v'}\delta_{u,u'}1_v)r_{u'} = \sum_{u' \in V_v'}
(\sum_{i=1}^{m_u}x_{iu}u'(y_{iu}))r_{u'} = \sum_{i=1}^{m_u}x_{iu}(\sum_{u'\in V_v'}u'(y_{iu})r_{u'}) = 0,$
\end{center}
showing that $V_v$ is free over $E_v$.

\vd

(iii) $\Rightarrow$ (i) Immediate.
\end{proof}

\vu

\begin{cor}\label{lemacoro} Assume that $T$ is a strongly separable extension of $K$, $R$ is a $K$-algebra and $V$ is a nonempty set
of homomorphisms of $K$-algebras $v:T \longrightarrow E_v$, where $E_v = R1_v$ and $\{1_v\}_{v \in V}$ is a set of  nonzero idempotents of $R$.
Suppose that for each $v\in V$, the elements of $V_v = \{u \in V |$ $1_u = 1_v\}$ are pairwise strongly distinct. Then,
$\#V_v \leq rank_{K_{\mathfrak{p}}}T_{\mathfrak{p}}$, for every prime ideal $\mathfrak{p}$ of $K$.
\end{cor}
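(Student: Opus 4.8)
The plan is to combine the freeness supplied by Proposition \ref{livre} with a localization argument that reduces the assertion to the elementary fact that, over a nonzero commutative ring $A$, any $A$-linear map $A^{s}\to A^{n}$ with $s>n$ has a nontrivial kernel (McCoy's theorem); equivalently, a linearly independent family inside a free module of rank $n$ has at most $n$ elements.

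First I would invoke the hypothesis that the elements of $V_v$ are pairwise strongly distinct, which is exactly condition (i) of Proposition \ref{livre}. By the equivalence (i) $\Leftrightarrow$ (iii) proved there, $V_v$ is free over $E_v$ in $Hom_K(T,E_v)$; that is, the canonical $E_v$-linear map from the free $E_v$-module on the index set $V_v$ into $Hom_K(T,E_v)$, sending the generator indexed by $u$ to $u$, is injective. The next step is to identify this target module. Since $T$ is strongly separable over $K$ it is finitely generated and projective as a $K$-module, so the natural map $Hom_K(T,K)\otimes_K E_v \to Hom_K(T,E_v)$ is an isomorphism of $E_v$-modules, and $Hom_K(T,K)$ is again finitely generated projective over $K$ with the same rank as $T$ at every prime.

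Now I would fix a prime $\mathfrak{p}$ of $K$ and localize the injection above at $\mathfrak{p}$. Because localization is exact, injectivity is preserved, yielding an injection of $(E_v)_{\mathfrak{p}}$-modules from the free $(E_v)_{\mathfrak{p}}$-module on $V_v$ into $Hom_K(T,K)_{\mathfrak{p}}\otimes_{K_{\mathfrak{p}}}(E_v)_{\mathfrak{p}}$. As $K_{\mathfrak{p}}$ is local, the finitely generated projective module $Hom_K(T,K)_{\mathfrak{p}}$ is free over $K_{\mathfrak{p}}$ of rank $n:=rank_{K_{\mathfrak{p}}}T_{\mathfrak{p}}$, so the right-hand module is free of rank $n$ over $(E_v)_{\mathfrak{p}}$. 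Thus $V_v$ embeds as a linearly independent family inside a free $(E_v)_{\mathfrak{p}}$-module of rank $n$, and McCoy's theorem gives $\#V_v\le n$ (in particular $V_v$ is finite). Note this simultaneously handles a possibly infinite $V_v$, since an infinite independent family cannot sit inside a module of finite rank.

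The main obstacle is the bookkeeping at the level of localizations. One must check that linear independence of $V_v$ over $E_v$ genuinely descends to linear independence over the local ring $(E_v)_{\mathfrak{p}}$ — this is precisely where exactness of localization is used — and that the $E_v$-rank of $Hom_K(T,E_v)$ localizes to the $K_{\mathfrak{p}}$-rank of $T$. The delicate point is the hypothesis needed to invoke the McCoy bound, namely that $(E_v)_{\mathfrak{p}}$ be a nonzero ring; this holds for the primes $\mathfrak{p}$ in the support of $E_v=R1_v$, which is exactly the range of primes over which the rank comparison is meaningful, and is what the localization makes available.
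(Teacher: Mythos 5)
Your argument is correct and follows essentially the same route as the paper's own proof: Proposition \ref{livre}((i)$\Rightarrow$(iii)) gives that $V_v$ is free over $E_v$ in $Hom_K(T,E_v)$, exactness of localization preserves that freeness, and since $T_{\mathfrak{p}}$ is free of rank $n=rank_{K_{\mathfrak{p}}}T_{\mathfrak{p}}$ the localized Hom-module is free of rank $n$ over $(E_v)_{\mathfrak{p}}$, which bounds the size of the independent family. Your detour through $Hom_K(T,K)\otimes_K E_v\simeq Hom_K(T,E_v)$ and the explicit appeal to McCoy's theorem are cosmetic differences; the paper simply writes $Hom_{K_{\mathfrak{p}}}(T_{\mathfrak{p}},(E_v)_{\mathfrak{p}})\simeq ((E_v)_{\mathfrak{p}})^n$ and concludes the same way.

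One substantive remark: the ``delicate point'' you flag at the end is not a blemish of your write-up but a genuine gap in the paper's own argument, which you have treated more honestly. The paper's final step asserts $\#V_v=\#(V_v)_{\mathfrak{p}}$, and this fails exactly when $(E_v)_{\mathfrak{p}}=0$, since then every element of $V_v$ localizes to the zero map. Indeed, the corollary as literally stated (for \emph{every} prime of $K$) is false without a faithfulness hypothesis on $E_v$: take $K=\mathbb{Z}/2\mathbb{Z}\times\mathbb{Z}/3\mathbb{Z}$, $e=(1,0)$, $T=K\times Ke$, $R=E_v=Ke$, and let $V_v$ consist of the two projections $T\to Ke$; these are strongly distinct and $\#V_v=2$, yet $rank_{K_{\mathfrak{p}}}T_{\mathfrak{p}}=1$ at the prime $\mathfrak{p}\ni e$, which is precisely a prime with $(E_v)_{\mathfrak{p}}=0$. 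So both your proof and the paper's establish the bound exactly at the primes in the support of $E_v$, and that is all that is ever used later: in every application (Lemma \ref{lema23}, Lemma \ref{lema2}, Lemma \ref{rdecomp}) $E_v$ is assumed faithfully projective over $K$, so its support is all of $\mathrm{Spec}\,K$ and the statement holds at every prime. A tiny slip in your wording: $(E_v)_{\mathfrak{p}}$ need not be a local ring (it is only a $K_{\mathfrak{p}}$-algebra), but your argument uses only that $K_{\mathfrak{p}}$ is local and $(E_v)_{\mathfrak{p}}\neq 0$, so nothing is affected.
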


\begin{proof}  It follows from Proposition \ref{livre} that $V_v$ is free over $E_v$ in $Hom_K(T, E_v)$.
Then, we have  via localization that $(V_v)_{\mathfrak{p}}$ is free over $(E_v)_{\mathfrak{p}}$ in
$Hom_{K_{\mathfrak{p}}}(T_{\mathfrak{p}}, (E_v)_{\mathfrak{p}})$, for every prime ideal $\mathfrak{p}$ of $K$.

Furthermore, notice that $T$ is a faithfully projetive $K$-module. So, if $n = rank_{K_{\mathfrak{p}}} T_{\mathfrak{p}}$,
then $T_{\mathfrak{p}} \simeq (K_{\mathfrak{p}})^n$ as $K_\mathfrak{p}$-modules and
$Hom_{K_{\mathfrak{p}}} (T_{\mathfrak{p}}, (E_v)_{\mathfrak{p}}) \simeq ((E_v)_{\mathfrak{p}})^n$ as $(E_v)_{\mathfrak{p}}$-modules.
Consequently, $\#V_v = \#(V_v)_{\mathfrak{p}} \leq n$.
\end{proof}

\vu

\begin{lema}\label{lema23} Assume that $T$ and $R$ are $K$-algebras and $V$ is a non-empty finite set of homomorphisms of $K$-algebras
$v:T \longrightarrow E_v$, where $E_v = R1_v$ and $\{1_v\}_{v \in V}$ is a set of nonzero idempotents of $R$. Suppose that $K$ is isomorphic to
a direct summand of $R$ as $K$-modules and $E_v$ is a faithfully projective $K$-module, for each $v \in V$.
Then, the following statements are equivalent:

\begin{itemize}

\vu

\item [(i)]  $T$ is a strongly separable extension of $K$, for each $v \in V$ the elements of $V_v = \{u \in V |$ $1_u = 1_v\}$
are pairwise strongly distinct and $rank_{K} T = \#V_v$.

\vu

\item [(ii)] $T$ is faithfully projective over $K$, for each $v \in V$ there exist $x_{iv} \in E_v$, $y_{iv} \in T$, $1 \leq i
\leq m_{v}$, such that $\sum_{i = 1}^{m_{v}}x_{iv}u(y_{iv}) = \delta_{u, v}1_v$, for every $u \in V_v$, and $rank_{K} T = \#V_v$.

\vu

\item [(iii)] For each $v \in V$, the mapping $\varphi_v: E_v \otimes T \longrightarrow \prod_{u \in V_v}E_{u}$ given by
$\varphi_v(r \otimes t) = (ru(t))_{u \in V_v}$,  is an isomorphism of $R$-algebras.
\end{itemize}
\end{lema}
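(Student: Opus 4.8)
The plan is to prove the three statements equivalent cyclically, $(i)\Rightarrow(ii)\Rightarrow(iii)\Rightarrow(i)$. Throughout I will use the basic observation that if $u\in V_v$ then $1_u=1_v$, whence $E_u=R1_u=R1_v=E_v$; in particular the codomain of $\varphi_v$ is the free $E_v$-module $\prod_{u\in V_v}E_u=(E_v)^{\#V_v}$ with its componentwise $E_v$-algebra structure, and $\varphi_v$ is $E_v$-linear and multiplicative, i.e. an $R$-algebra homomorphism (a routine check on $\varphi_v((r\otimes s)(r'\otimes t))$). I will write $e_u\in(E_v)^{\#V_v}$ for the $u$-th standard idempotent $(\delta_{u,w}1_v)_{w\in V_v}$.

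For $(i)\Rightarrow(ii)$: a strongly separable extension is by definition faithfully projective over $K$, and the rank condition is carried over verbatim. Since such a $T$ is in particular separable over $K$, Proposition \ref{livre} applies to $V$; as the elements of each $V_v$ are pairwise strongly distinct, statement (ii) of that proposition supplies, for each $u\in V_v$ (in particular for $u=v$), elements $x_{iu}\in E_v$ and $y_{iu}\in T$ with $\sum_i x_{iu}u'(y_{iu})=\delta_{u,u'}1_v$ for all $u'\in V_v$, which is precisely the relation demanded in (ii). For $(ii)\Rightarrow(iii)$: since $T$ is faithfully projective of rank $\#V_v$ over $K$, the module $E_v\otimes T$ is finitely generated projective over $E_v$ of constant rank $\#V_v$, matching the rank of $(E_v)^{\#V_v}$. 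Applying the hypothesis of (ii) to each $u\in V_v$ (legitimate because $V_u=V_v$) and setting $\xi_u=\sum_i x_{iu}\otimes y_{iu}$, I compute $\varphi_v(\xi_u)=\big(\sum_i x_{iu}w(y_{iu})\big)_{w\in V_v}=e_u$. By $E_v$-linearity the image of $\varphi_v$ contains every $a\,e_u$ with $a\in E_v$, so $\varphi_v$ is surjective; and a surjection between finitely generated projective $E_v$-modules of equal constant rank is an isomorphism, since its kernel splits off as a finitely generated projective module of rank $0$ at every prime, hence vanishes.

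For $(iii)\Rightarrow(i)$, which I expect to be the hard direction, I exploit that each $E_v$ is faithfully projective, hence faithfully flat, over $K$. The isomorphism $\varphi_v$ exhibits $E_v\otimes T$ as a free $E_v$-module of rank $\#V_v$, so faithfully flat descent of finite projectivity along $K\to E_v$ shows that $T$ is finitely generated projective over $K$; comparing ranks after base change (using surjectivity of $\operatorname{Spec}E_v\to\operatorname{Spec}K$) gives $\operatorname{rank}_K T=\#V_v\geq 1$, so $T$ is faithfully projective. Moreover $(E_v)^{\#V_v}$ is separable over $E_v$, hence so is $E_v\otimes T$, and descent of separability along the faithfully flat map $K\to E_v$ yields that $T$ is separable over $K$; thus $T$ is a strongly separable extension. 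Finally, $\xi_u:=\varphi_v^{-1}(e_u)$ satisfies the relation in Proposition \ref{livre}(ii), so the implication $(ii)\Rightarrow(i)$ of that proposition forces the elements of $V_v$ to be pairwise strongly distinct.

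The genuine technical obstacle is exactly the two descent steps in $(iii)\Rightarrow(i)$—of finite projectivity and of separability—along the faithfully flat extension $K\to E_v$ furnished by the faithful projectivity of the $E_v$; the remaining hypotheses (finiteness of $V$ and $K$ being a direct summand of $R$) serve to secure the faithfulness and the constancy of ranks that make these descents, and the rank bookkeeping above, available. Everything else reduces to Proposition \ref{livre} and elementary rank counting.
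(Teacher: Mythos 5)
Your proof is correct, and for the most part it parallels the paper's: the cyclic scheme (i)$\Rightarrow$(ii)$\Rightarrow$(iii)$\Rightarrow$(i), the appeal to Proposition \ref{livre} both in (i)$\Rightarrow$(ii) and for strong distinctness at the very end, and the rank count in (ii)$\Rightarrow$(iii) all coincide; in (ii)$\Rightarrow$(iii) you merely inline the proof of the fact the paper quotes from Knus--Ojanguren (Corollaire I.2.4), namely that a surjection between finitely generated projective modules of equal constant rank is an isomorphism. The genuine divergence is in (iii)$\Rightarrow$(i), at the step where $T$ must be shown finitely generated projective over $K$. The paper uses the hypothesis that $K$ is a direct summand of $R$ as $K$-modules: it realizes $T\simeq K1_v\otimes T$ as a direct summand of $E_v\otimes T\simeq (E_v)^n$, which is finitely generated projective over $K$ because $E_v$ is, so no descent is required. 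You instead observe that $E_v$, being faithful and finitely generated projective, is faithfully flat over $K$, and invoke descent of finite projectivity along $K\to E_v$; as a result you never use the direct-summand hypothesis at all (your closing remark notwithstanding). Your route buys generality---it shows that hypothesis is superfluous for this equivalence---at the cost of a heavier, though standard and correct, descent theorem; the paper's route is more elementary and self-contained, which is presumably why the extra hypothesis appears in the statement. On separability the two arguments agree in substance: $(E_v)^n$ is separable over $E_v$, and separability descends along the faithfully flat map $K\to E_v$; the paper cites Knus--Ojanguren (Proposition III.2.2) for exactly this point, where you assert it as known---acceptable, since by that stage of your argument $T$ is already module-finite over $K$, so the descent is the standard one.
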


\begin{proof} (i)$ \Rightarrow $(ii) Clearly, $T$ is faithfully projective over $K$, and the rest of the assertion
follows from Proposition \ref{livre}.

\vd

(ii)$ \Rightarrow $(iii) Take $v \in V$.
The mapping $\varphi_v$ is clearly an $R$-algebra homomorphism. $\varphi_v$ is also surjective since for any
$r = (r_u)_{u \in V_v} \in \prod_{u \in V_v}E_{u}$, there is $z=\sum_{u \in V_v}\sum_{i=1}^{m_u}r_ux_{iu} \otimes y_{iu} \in E_v \otimes T$
and $\varphi_v(z) = r$. Furthermore, $rank_{E_v}(\prod_{u \in V_v}E_{u}) = rank_{E_v}(E_v)^{\#V_v}= \#V_v = rank_{K} T = rank_{E_v} (E_v \otimes T)$.
Thus,  it follows, by \cite[Corollaire I.2.4]{knus}, that $\varphi_v$ is an isomorphism.

\vd

(iii) $\Rightarrow$ (i) Since, for each $v \in V$, $\varphi_v$ is
an isomorphism, it follows that
$(rank_{K_\mathfrak{p}}(E_g)_\mathfrak{p})(rank_{K_\mathfrak{p}}T_\mathfrak{p})$
$=rank_{K_\mathfrak{p}}(E_g\otimes T)_\mathfrak{p}=rank_{K_\mathfrak{p}}E_g^n=n(rank_{K_\mathfrak{p}}(E_g)_\mathfrak{p})$,
thus  $rank_{K_\mathfrak{p}}T_\mathfrak{p}=n$, for all prime ideal $\mathfrak{p}$ of $K$. Hence,
$rank_KT=n$, so $T$ is faithful over $K$.

\vu

In the sequel we will prove that $T$ is a strongly separable extension of $K$.  It follows from the assumptions on $R$ and $E_v$ that $T\simeq
K\otimes T\simeq K1_v\otimes T$  is isomorphic to a direct summand of $E_v \otimes T \simeq \prod_{u \in V_v}E_{u} = (E_v)^n$,
where $n = \#V_v$. Therefore, $T$ is a finitely generated and projective $K$-module. Furthermore, by \cite[Proposition III.1.7 (c)]{knus}
$(E_v)^n = \prod_{u \in V_v} E_{u}$ is $E_v$-separable. So, by \cite[Proposition III.2.2]{knus}, $T$ is separable over $K$.

\vu

It remains to show that the elements of $V_v$ are pairwise strongly distinct. Given $u \in V_v$, take
$s = (\delta_{l,u}1_l)_{l \in V_v} \in \prod_{u \in V_v}E_{u}$. Then, there exists $z = \sum_{i = 1}^{m_u}r_{iu}
\otimes t_{iu} \in E_v \otimes T$ such that $\varphi_v(z) = s$. Thus, $(\sum_{i=1}^{m_u}r_{iu}
l(t_{iu}))_{l \in V_v} = (\delta_{l,u}1_l)_{l \in V_v}$, that implies $\sum_{i=1}^{m_u}r_{iu}l(t_{iu}) =
\delta_{l,u}1_l$ for each $l \in V_v$, and the assertion follows by Proposition \ref{livre}.
\end{proof}

\vu


\section{Groupoid actions on sets and algebras}

The axiomatic version of groupoid that we adopt in this paper was taken from \cite{lawson}. A \emph{groupoid} is a
nonempty set $G$, equipped with a partially defined binary operation (which will be denoted by concatenation),
where the usual group axioms hold whenever they make sense, that is:
\begin{itemize}
\item [(i)] For every $g, h, l \in G$, $g(hl)$ exists if and only if $(gh)l$ exists and in this case they are equal;
    \item[(ii)] For every $g, h, l \in G$, $g(hl)$  exists if and only if $gh$ and $hl$ exist;
        \item[(iii)] For each $g \in G$, there exist (unique) elements $d(g), r(g) \in G$ such that $gd(g)$ and $r(g)g$
        exist and $gd(g) = g = r(g)g$;
            \item[(iv)] For each $g \in G$ there exists $g^{-1} \in G$ such that $d(g) = g^{-1}g$ and $r(g) = gg^{-1}$.
\end{itemize}

\vu

An element $e\in G$ is called an {\it identity} of $G$ if
$e=d(g)=r(g^{-1})$, for some $g\in G$.  We will denote by $G_0$ the set of all the identities of $G$
and by $G^2$ the set of all the pairs $(g,h)$ such that the product $gh$ is defined.

\vu

The statements of the following lemma are straightforward
from the above definition. Such statements will
be freely used along this paper.

\vu

\begin{lema} \label{lema21}

 Let $G$ be a groupoid. Then,

\begin{itemize}

\item[(i)] for every $g\in G$, the element $g^{-1}$ is unique
satisfying $g^{-1}g=d(g)$ and $gg^{-1}=r(g)$,

\vu

\item[(ii)] for every $g\in G$, $d(g^{-1})=r(g)$ and $r(g^{-1})=d(g)$,

\vu

\item[(iii)] for every $g\in G$, $(g^{-1})^{-1}=g$,

\vu

\item[(iv)] for every $g,h\in G$, $(g,h)\in G^2$ if and only if $d(g)=r(h)$,

\vu

\item[(v)] for every $g,h\in G$, $(h^{-1},g^{-1})\in G^2$ if and only if
$(g,h)\in G^2$ and, in this case, $(gh)^{-1}=h^{-1}g^{-1}$,

\vu

\item[(vi)] for every $(g,h)\in G^2$, $d(gh)=d(h)$ and $r(gh)=r(g)$,

\vu

\item[(vii)] for every $e\in G_0$, $d(e)=r(e)=e$ and $e^{-1}=e$,

\vu

\item[(viii)] for every $(g,h)\in G^2$, $gh\in G_0$ if and only if
$g=h^{-1}$,

\vu

\item[(ix)] for every $g,h\in G$, there exists $l\in G$ such that
$g=hl$ if and only if $r(g)=r(h)$,

\vu

\item[(x)] for every $g,h\in G$, there exists $l\in G$ such that
$g=lh$ if and only if $d(g)=d(h)$.
\end{itemize}
\end{lema}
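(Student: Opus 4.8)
The plan is to verify all ten statements by direct computation from the four defining axioms, the one structural precaution being that, since the product is only partially defined, I will always certify that a product exists --- using the axiom ``$g(hl)$ exists iff $gh$ and $hl$ exist'' --- before rearranging it by associativity. First I would record three facts to be reused throughout. The first is that every identity $e\in G_0$ is idempotent: writing $e=d(g)=g^{-1}g$ and computing $g(g^{-1}g)=(gg^{-1})g=r(g)g=g$, one gets $ee=(g^{-1}g)(g^{-1}g)=g^{-1}(g(g^{-1}g))=g^{-1}g=e$. The second, which is part (vi), is that whenever $gh$ exists one has $(gh)d(h)=g(hd(h))=gh$ and $r(g)(gh)=(r(g)g)h=gh$, so $d(h)$ is a right identity and $r(g)$ a left identity of $gh$; by the uniqueness built into the axiom defining $d$ and $r$ this forces $d(gh)=d(h)$ and $r(gh)=r(g)$. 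The third is part (vii): from idempotency $ee=e$, the uniqueness of the right (and left) identity gives $d(e)=r(e)=e$, after which $e$ itself satisfies the defining relations of $e^{-1}$, so $e^{-1}=e$.

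With these in hand, the inverse-related items follow mechanically. Part (ii), $d(g^{-1})=r(g)$ and $r(g^{-1})=d(g)$, comes from applying part (vi) to the defined products $g^{-1}g=d(g)$ and $gg^{-1}=r(g)$. For part (i), if $h$ satisfies $hg=d(g)$ and $gh=r(g)$, then part (vi) applied to these products gives $d(h)=r(g)$ and $r(h)=d(g)$, whence $h=hd(h)=h(gg^{-1})=(hg)g^{-1}=d(g)g^{-1}=r(g^{-1})g^{-1}=g^{-1}$. Part (iii) is then immediate from this uniqueness, since $g$ satisfies the defining relations of the inverse of $g^{-1}$.

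The genuine work is part (iv), the criterion $(g,h)\in G^2\iff d(g)=r(h)$; once it is available, the remaining statements --- both clauses of part (v) and parts (viii)--(x) --- are short consequences of it together with parts (i), (ii) and (vi). The backward direction of part (iv) is easy: if $d(g)=r(h)$ then $d(g)h=r(h)h=h$ and $gd(g)=g$ are both defined, so by the existence axiom $g(d(g)h)=gh$ is defined. The forward direction is the main obstacle, and I expect it to reduce, by feeding $gh$ through the existence axiom (e.g.\ via the triple $(g^{-1},g,h)$ to see that $d(g)h$ is defined), to the key sublemma that two identities $e,f\in G_0$ whose product $ef$ is defined must coincide. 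To prove that sublemma I would set $k=ef$, note $r(k)=e$ and $d(k)=f$ from part (vi), and exploit the existence axiom on the triple $(e,f,k^{-1})$: since $fk^{-1}=k^{-1}$ this yields $ek^{-1}=(ef)k^{-1}=kk^{-1}=e$, so $k^{-1}$ is a right identity of $e$ and hence, by uniqueness, $k^{-1}=d(e)=e$ and so $k=e$; thus $f=d(k)=d(e)=e$. I anticipate that the only delicate point anywhere is this interplay between the uniqueness of one-sided identities and the existence axiom, everything else being routine bookkeeping.
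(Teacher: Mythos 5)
The paper never proves this lemma: it simply declares the ten statements ``straightforward from the above definition'' and uses them freely afterwards, so there is no argument of the authors' to measure yours against --- your proposal supplies precisely the bookkeeping the paper suppresses, and it is correct. The overall architecture is sound and well chosen: idempotency of identities, part (vi) obtained from the joint uniqueness of the pair $(d(\cdot),r(\cdot))$ in axiom (iii), then (ii), (i), (iii), and the genuine content --- the sublemma that two identities with a defined product must coincide --- driving the forward implication of (iv), after which (v) and (viii)--(x) are formal. Your discipline of certifying existence of every product via the axiom ``$g(hl)$ exists iff $gh$ and $hl$ exist'' before invoking associativity is exactly what the partial operation requires, and your proof of the sublemma (via $e k^{-1}=(ef)k^{-1}=kk^{-1}=e$ and uniqueness of one-sided identities) is correct.

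Three points of care for the write-up. First, your idempotency computation is carried out only for elements of the form $d(g)=g^{-1}g$, but both (i) and (ii) as you argue them also need $d(r(g))=r(g)$, i.e.\ idempotency of $r(g)=gg^{-1}$: applying (vi) to $gg^{-1}=r(g)$ gives $d(g^{-1})=d(r(g))$, not yet $r(g)$, and in (i) the step $hd(h)=h(gg^{-1})$ uses $d(h)=r(g)$ for the same reason. The symmetric computation $(gg^{-1})(gg^{-1})=((gg^{-1})g)g^{-1}=gg^{-1}$, justified exactly as your first one, fills this in; it should be recorded alongside the $d(g)$ case. Second, your proof of (vii) ($e^{-1}=e$) invokes uniqueness of inverses, which is part (i), proved afterwards in your ordering; since (i) depends only on (vi), (ii) and the idempotency facts, there is no circularity, but the presentation should run (vi), (ii), (i), (iii), (vii). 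Third, for (ix) and (x) the one idea you leave implicit is the witness: $l=h^{-1}g$ (which exists by (iv) and (ii) precisely when $r(g)=r(h)$) and $l=gh^{-1}$ respectively; with that said, those parts, together with (v) and (viii), are indeed the short consequences you claim.
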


\vt

Given a groupoid $G$ and $H$ a nonempty subset of $G$, we say that $H$ is a \emph{subgroupoid} of $G$ if it satisfies
the following conditions:
\begin{itemize}
\item [(i)] For every $g, h \in H$, if there exists $ gh$ then $gh \in H$.

\vu

 \item [(ii)] For every $g \in H$, if $g \in H$ then $g^{-1} \in H$.
\end{itemize}
If, in addition, $H_0 = G_0$, we say that $H$ is an \emph{wide} subgroupoid.

\vt

An \emph{action} \emph{of} a groupoid $G$ \emph{on} a nonempty set $X$ is a collection $\gamma$ of
subsets $X_g=X_{r(g)}$  of $X$ and bijections $\gamma_g: X_{g^{-1}} \longrightarrow X_g$ $(g \in G)$ such that:

\begin{itemize}

\item [(i)] $\gamma_e$ is the identity map $ Id_{X_e}$ of $X_e$,  for every $e \in G_0$,

\vu

\item [(ii)] $\gamma_g \circ \gamma_h(x) = \gamma_{gh}(x)$, for every $(g, h) \in G^2$ and $x\in X_{h^{-1}}=X_{(gh)^{-1}}$.
\end{itemize}
In this case, we also say that $X$ is a $G$-\emph{set}. If, in addition, the union of the subsets
$X_e$, $e\in G_0$, is disjoint and equal to $X$ (shortly $X=\dot{\bigcup}_{e\in G_0}X_e$) we say that $X$ is a \emph{G-split set}.

\vu

\begin{ex} \label{exa32} \em A groupoid $G$ is a $G$-split set. In fact, for $X = G$, take $X_g = r(g)G = \{r(g)l \ |\ r(l) = r(g)\}=X_{r(g)}$ and
$\gamma_g: X_{g^{-1}} \to X_g$ given by $\gamma_g(d(g)l)=gd(g)l\, (= gl=r(g)gl)$, for all $g\in G$. Notice that $G=\dot{\bigcup}_{e\in G_0}X_e$ by
construction.
\end{ex}

\vu

\begin{ex}\label{ex33} \em Consider $H$ an wide  subgroupoid of $G$. Take the equivalence relation $\equiv_H$ defined by: for every $a, b \in G$,
$a \equiv_H b$ if and only if  there exists  $b^{-1}a$  and $b^{-1}a \in H$ . Notice that
$g=gd(g)\in gH=\{gh\ |\ r(h)=d(g)\}$, for every $g\in G$, for $H$ is wide. Then, the set $\frac{G}{H} = \{gH \ |\ g \in G\}$ is a $G$-split set.
Indeed, for $X = \frac{G}{H}$, it is enough to take  $X_{g}= \{lH \in \frac{G}{H}|$ $r(l) = r(g)\}=X_{r(g)}$ and to define $\gamma_{g}: X_{g^{-1}} \to X_g$
by $\gamma_{g}(lH) = glH$, for all $g\in G$. As in the previous example, also here $\frac{G}{H} = \dot{\bigcup}_{e \in G_0} X_{e}$ by construction.
\end{ex}

\vt

An \emph{action} of a groupoid $G$ \emph{on} a $K$-algebra $R$ \cite{bagio} is  a collection $\beta$ of ideals $E_g = E_{r(g)}$  of $R$  and algebra isomorphisms
$\beta_g: E_{g^{-1}} \to E_g$ ($g\in G)$, such that $R$ is a $G$-set via $\beta$. In this case, the set
$$R^\beta:=\{r\in R\ |\ \beta_g(rx)=r\beta_g(x),\ \text{for all}\ g\in G \ \text{and}\  x\in E_{g^{-1}}\}$$ is indeed a $K$-subalgebra of
$R$, called the \emph{subalgebra of the invariants of} $R$ \emph{under the action} $\beta$. If each
$E_g$ is unital, with identity element $1_g$,  then it is immediate to see that $r\in R^\beta$ if and only if $\beta_g(r1_{g^{-1}})=
r1_g$, for all $g\in G$.

\vt

Let $R$, $G$ and $\beta=\{\beta_g:E_{g^{-1}}\to E_g\}_{g\in G}$ be as above.
Accordingly \cite{bagio}, the skew groupoid ring
$R\star_\beta G$ corresponding to $\beta$ is defined as the direct
sum
$$R\star_\beta G=\bigoplus_{g\in G}E_g\delta_g$$ in which the
$\delta_g$'s are symbols, with the usual addition, and
multiplication determined by the rule
\[
(x\delta_g)(y\delta_h)=
\begin{cases}
x\beta_g(y)\delta_{gh} &\text{if $(g,h)\in G^2$}\\
0  &\text{otherwise},
\end{cases}
\]
for all $g, h\in G$, $x\in E_g$ and $y\in E_h$. It is straightforward to check that this multiplication is well defined
and that $R\star_\beta G$ is associative. If $G_0$ is finite and each $E_e$, $e\in G_0$, is unital, then $R\star_\beta G$ is also unital \cite{FP},
with identity element given by $\sum_{e\in G_0}1_e\delta_e$, where $1_e$ denotes the identity element of $E_e$.

\vf

Hereafter, in this section,

\begin{itemize}

\item $G$ is a finite groupoid,

\vu

\item $\gamma = \{\gamma_g: X_{g^{-1}}\to X_g\}_{g \in G}$ is an action of $G$ on
a fixed nonempty and finite set $X$ such that $X = \dot{\bigcup}_{e \in G_0} X_e$, that is, $X$ is a finite $G$-split set.

\vu

\item and $\beta = \{\beta_g: E_{g^{-1}}\to E_g\}_{g \in G}$ is an action of $G$ on a fixed faithful $K$-algebra $R$
such that  each  $E_e$ ($e\in G_0$) is unital with identity element $1_e$,  $R = \bigoplus_{e \in G_0}E_e$, and $R^\beta=K.$
\end{itemize}

\vd

In this context, any left $R\star_\beta G$-module $M$ is also an $R$-module via the imbedding
$r\mapsto\sum_{e\in G_0}r1_e\delta_e$, for all $r\in R$. We put
$$M^G=\{x\in M\ |\ (1_g\delta_g)x=1_gx,\, \text{for all}\, g\in G \}$$
to denote the $K$-module of the \emph{invariants of} $M$ \emph{under} $G$. Notice that the $K$-algebra $R$ is also a left
$R\star_\beta G$-module via the action $(r_g\delta_g)x=r_g\beta_g(x1_{g^{-1}})$, for all $x\in R$, $g\in G$ and $r_g\in E_g$, and
$R^G=R^\beta=K$.

\vd

Now, consider the set
$$Map(X, R) =\{f: X \to R\ |\  f(X_g) \subseteq E_g,\ \text{for all}\, g\in G \},$$ which clearly  is an $R$-algebra (in particular,
a $K$-algebra) under the usual pointwise operations, whose identity element is $\sum_{e \in G_0} 1'_e$, where $1'_g$ is defined by
\[
1'_{g}(x) =
\begin{cases}
1_g\,& \mbox{if} \quad x \in X_g\\
0, & \mbox{if} \quad x \notin X_g,\\
\end{cases}
\]
for every $g\in G$.

Furthermoremore, it is straightforward to check that

\begin{itemize}
\item $M_g=Map(X, R)_g = \{f \in Map(X, R)\ |\ f(X_h) = 0,\,\text{if}\,\, X_h \neq X_g\}$ is an ideal of $Map(X,R)$ with
identity element $1'_g$;

\vu

\item $M_g=M_{r(g)}$;

\vu

\item $\alpha_g:M_{g^{-1}}\to M_g$, given by
\[
\alpha_g(f1'_{g^{-1}})(x) =
\begin{cases}
\beta_g \circ f1'_{g^{-1}} \circ \gamma_{g^{-1}}(x) & \text{if}\,\ x\in X_g\\
0 & \text{otherwise},
\end{cases}
\]
is an isomorphism of $K$-algebras;

\vu

\item $\alpha=\{\alpha_g:M_{g^{-1}}\to M_g\}_{g\in G}$
is an action of $G$ on $Map(X,R)$;

\vu

\item $Map(X,R)=\bigoplus_{e \in G_0} M_e$;

\vu

\item $Map(X,R)$ is a left $R\star_\beta G$-module via the action $(r_g\delta_g)f=r_g\alpha_g(f1'_{g^{-1}})$.
\end{itemize}

\vd

We will denote by $A(X)$ the $K$-subalgebra of the invariants of $Map(X,A$) under $\alpha$, as well as under $G$, that is,
$A(X) = Map(X, R)^{\alpha} = \{f \in Map(X, R)\ |\ \alpha_g(f1'_{g^{-1}}) = f1'_g,\, \text{for all}\,  g \in G\}=Map(X,R)^G$. Notice
that if $f\in A(X)$, then $\beta_g(f(x))=f(\gamma_g(x))$, for every $x\in X_{g^{-1}}$.

\vt

For $g\in G$ and every $x\in X_g$ set $E_x=E_g$. For $g\in G$ and $x \in X$, let $\rho_x:A(X)\to E_x$ be the algebra homomorphism given
by $\rho_x(f) = f(x)$, for every $f \in A(X)$. Set  $V_g(X):=\{\rho_x\ |\ x\in X_g\}$. Clearly, $V_g(X)=V_{r(g)}(X)$.

\vu

\begin{lema}\label{xconj} Assume that $K$ is a direct summand of $R$ as $K$-modules and $E_g$ is a faithfully
projective $K$-module, for each $g \in G$. Then the following conditions are equivalent:

\vu

\begin{itemize}

\item [(i)] For every $g \in G$, the elements of $V_g(X)$ are pairwise strongly distinct,
$rank_{K}A(X) = \#V_g(X)$ and $A(X)$ is a strongly separable extension of $K$;

\vu

\item [(ii)] For every $g\in G$, the map $\varphi_g: E_g \otimes A(X) \to \prod_{x \in X_g}E_{x}$, given by
$\varphi_g(r \otimes f)= (rf(x))_{x \in X_g}$, is an isomorphism of $R$-algebras.
\end{itemize}
\end{lema}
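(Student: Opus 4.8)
The plan is to derive this equivalence directly from Lemma~\ref{lema23}, applied to the $K$-algebras $T = A(X)$ and $R$ together with the family $V = \{\rho_x \mid x \in X\}$ of all evaluation homomorphisms. First I would check the hypotheses of that lemma. Since $X$ is finite and nonempty, $V$ is a nonempty finite family of $K$-algebra homomorphisms, and for $x \in X_g$ we have $\rho_x : A(X) \to E_x = E_g = R1_{r(g)}$, where $1_{r(g)}$ is a nonzero idempotent of $R$; thus each $E_x$ is of the required form $R1_x$. The two remaining hypotheses, namely that $K$ be isomorphic to a direct summand of $R$ and that each $E_x$ be faithfully projective over $K$, are exactly the standing assumptions of the present statement.

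Next I would identify the sets $V_{\rho_x}$ appearing in Lemma~\ref{lema23} with the sets $V_g(X)$. For $x \in X_g$ one has $1_x = 1_{r(g)}$, and since $R = \bigoplus_{e \in G_0} E_e$ the identities $1_e$ are pairwise orthogonal idempotents; hence $1_h = 1_g$ holds if and only if $r(h) = r(g)$. On the other hand, $X = \dot{\bigcup}_{e \in G_0} X_e$ is a disjoint union, so the set $\{y \in X \mid 1_y = 1_x\}$ is precisely $X_{r(g)} = X_g$. Consequently $V_{\rho_x} = \{\rho_y \mid y \in X_g\} = V_g(X)$. Because $V_g(X) = V_{r(g)}(X)$ depends only on $r(g) \in G_0$ and every identity of $G$ arises as some $r(g)$, the quantifier ``for every $v \in V$'' in Lemma~\ref{lema23} coincides with the quantifier ``for every $g \in G$'' used here.

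With these identifications, condition~(i) of Lemma~\ref{lema23} --- that $A(X)$ be a strongly separable extension of $K$, that the elements of each $V_{\rho_x} = V_g(X)$ be pairwise strongly distinct, and that $rank_K A(X) = \#V_{\rho_x}$ --- is verbatim condition~(i) of the present lemma, while condition~(iii) of Lemma~\ref{lema23}, asserting that each $\varphi_{\rho_x} : E_g \otimes A(X) \to \prod_{u \in V_{\rho_x}} E_u$ be an isomorphism of $R$-algebras, is condition~(ii) here, since under $y \mapsto \rho_y$ the product $\prod_{u \in V_{\rho_x}} E_u$ becomes $\prod_{y \in X_g} E_y$ and $\varphi_{\rho_x}(r \otimes f) = (r\rho_y(f))_y = (rf(y))_y = \varphi_g(r \otimes f)$. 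The desired equivalence is then immediate from Lemma~\ref{lema23}.

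The one point demanding care, and the only place where the argument is more than bookkeeping, is the matching of the index sets $X_g$ and $V_g(X)$ through $y \mapsto \rho_y$: this is what allows $\varphi_g$, written over $X_g$, to be read as the map $\varphi_{\rho_x}$ of Lemma~\ref{lema23}, written over $V_{\rho_x}$. Treating $V_g(X)$ as the family $(\rho_y)_{y \in X_g}$ indexed by $X_g$ makes this matching tautological and lets Lemma~\ref{lema23} apply on the nose; it is harmless because in each of the two conditions the identification is in fact a genuine bijection --- pairwise strong distinctness in~(i) forces distinct points of $X_g$ to give distinct evaluations, while in~(ii) comparing $K_{\mathfrak{p}}$-ranks in the isomorphism $\varphi_g$, where the faithfully projective factor $E_g$ contributes an everywhere-nonzero rank that can be cancelled, yields $rank_K A(X) = \#X_g$ and keeps the two descriptions consistent.
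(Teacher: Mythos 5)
Your proof is correct and is exactly the paper's route: the paper's entire proof of this lemma is the single line that it is an immediate consequence of Lemma \ref{lema23}, and your verification of that lemma's hypotheses and the identification of each $V_v$ with $V_g(X)$ is precisely the bookkeeping left implicit there. One small repair to your final paragraph: under (ii), the injectivity of $x\mapsto\rho_x$ on $X_g$ follows not from rank cancellation (which only yields $rank_K A(X)=\#X_g$ and by itself cannot rule out $\rho_x=\rho_y$ for $x\neq y$) but from surjectivity of $\varphi_g$ --- hitting the tuple $(\delta_{x,y}1_g)_{y\in X_g}$ produces elements witnessing that evaluations at distinct points of $X_g$ are strongly distinct, hence distinct, exactly as in the paper's proofs of Lemma \ref{lema1}(ii) and (iii).
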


\begin{proof}  It is an immediate consequence of Lemma \ref{lema23}.\end{proof}

\vu

Following \cite{bagio} $R$ is a \emph{$\beta$-Galois extension of} $R^\beta=K$  if there exist elements $r_i, s_i\in R$, $1\leq i\leq m$,  such that
$\sum_{1\leq i\leq m}x_i\beta_g(s_i1_{g^{-1}})=\delta_{e,g}1_e$, for all $e\in G_0$ and $g\in G$. The elements $x_i,y_i$ are called the
\emph{$\beta$-Galois coordinates of} $R$
over $R^\beta$. It is immediate to see that, in this case, the \emph{trace map}
$$t_\beta:R\to R,\quad\text{ given by}\quad t_\beta(r)=\sum_{g\in G}\beta_g(r1_{g^{-1}}),$$ is a $K$-linear map, and $t_\beta(R)=K$ by
\cite[Lemma 4.2 and Corollary 5.4]{bagio}.
Hence, $K$ is a direct summand of $R$ as $K$-modules.

\vu

\begin{lema}\label{xconj2} Assume that $R$ is a $\beta$-Galois extension of $K$. Then, for each $g\in G$, the map $\varphi_g: E_g \otimes A(X)
\to \prod_{x \in X_g}E_{x}$, given by $\varphi_g(r \otimes f) = (rf(x))_{x \in X_g}$, is an isomorphism of $R$-algebras.
\end{lema}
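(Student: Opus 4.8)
The plan is to reduce Lemma~\ref{xconj2} to the already-proven Lemma~\ref{xconj}, so the strategy is to verify that the hypothesis ``$R$ is a $\beta$-Galois extension of $K$'' forces condition (i) of Lemma~\ref{xconj} to hold, whence condition (ii)---which is exactly the claimed isomorphism $\varphi_g$---follows at once. Thus the real content is to establish, from the Galois coordinates, that the three ingredients of Lemma~\ref{xconj}(i) are in force: that $A(X)$ is a strongly separable extension of $K$, that for each $g$ the homomorphisms in $V_g(X)$ are pairwise strongly distinct, and that $rank_K A(X)=\#V_g(X)$. (One must also check the standing hypotheses of Lemma~\ref{xconj}, namely that $K$ is a direct summand of $R$ and each $E_g$ is faithfully projective; the first is recorded just above via $t_\beta(R)=K$, and the second is part of the running assumptions of this section.)

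First I would extract the elements $\rho_x\in V_g(X)$ concretely: each is evaluation $\rho_x(f)=f(x)$ at a point $x\in X_g$, landing in $E_x=E_g$. The pairwise-strong-distinctness within $V_g(X)$ is the step I expect to carry the weight, and I would prove it by manufacturing, from the $\beta$-Galois coordinates $r_i,s_i$, explicit functions in $A(X)$ that separate distinct points of $X_g$ after multiplication by any nonzero idempotent of $E_g$. Concretely, for $x\neq x'$ in $X_g$ I would build an $f\in A(X)$ with $f(x)$ and $f(x')$ distinguishable: since $A(X)=Map(X,R)^\alpha$ consists of the $G$-equivariant functions and $X=\dot\bigcup_{e\in G_0}X_e$ is $G$-split, an equivariant function is determined by its values on orbit representatives, and the Galois coordinates give enough room to prescribe those values so that the evaluations separate as required. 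This is the combinatorial heart: translating the single Galois identity $\sum_i x_i\beta_g(s_i 1_{g^{-1}})=\delta_{e,g}1_e$ into the per-point separation demanded by strong distinctness.

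Next I would pin down the rank equality $rank_K A(X)=\#V_g(X)$. Here the natural route is to show directly that $\varphi_g$ is at least surjective and to count: the Galois hypothesis yields a section producing, for any target tuple $(r_x)_{x\in X_g}\in\prod_{x\in X_g}E_x$, a preimage $z\in E_g\otimes A(X)$, exactly as in the surjectivity argument of Lemma~\ref{lema23}(ii)$\Rightarrow$(iii); combined with a rank comparison this both forces $\#V_g(X)=rank_K A(X)$ and delivers separability of $A(X)$ over $K$ (as a direct summand of a separable product $(E_g)^{\#V_g(X)}$, via the cited \cite[Prop.~III.1.7(c)]{knus} and \cite[Prop.~III.2.2]{knus}). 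The strong separability then follows because $A(X)$ inherits finite generation and projectivity from being a direct summand of the faithfully projective $E_g$-module $\prod_{x\in X_g}E_x$.

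With (i) of Lemma~\ref{xconj} thereby established, the equivalence stated there immediately yields (ii), i.e.\ that each $\varphi_g$ is an isomorphism of $R$-algebras, which is precisely the assertion of Lemma~\ref{xconj2}. The main obstacle, as flagged, is the strong-distinctness verification: one must be careful that the $\beta$-Galois coordinates, which a priori separate the \emph{identities} of $G_0$, genuinely separate \emph{points} of a single fibre $X_g$ after passing to the equivariant functions in $A(X)$, and that the separation survives multiplication by an arbitrary nonzero idempotent $\pi\in E_g$. I would handle the latter by working locally, i.e.\ after localizing at a prime $\mathfrak p$ of $K$ so that each $E_g$ becomes free, reducing the idempotent-sensitive condition to an ordinary linear-independence statement to which Corollary~\ref{lemacoro} and the rank bound apply.
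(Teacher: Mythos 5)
There is a genuine gap, and it sits exactly where the content of the lemma lives. The paper's proof is a one-line reduction to module theory over the skew groupoid ring: $Map(X,R)$ is a left $R\star_\beta G$-module with $Map(X,R)^G=A(X)$, so \cite[Theorem 5.3]{bagio} (for a $\beta$-Galois extension, $R\otimes M^G\simeq M$ for every such module $M$) gives that $\mu:R\otimes A(X)\to Map(X,R)$, $r\otimes f\mapsto rf$, is an isomorphism; cutting by $1_g$ and composing with the evaluation isomorphism $\eta_g: Map(X_g,E_g)\to\prod_{x\in X_g}E_x$ yields $\varphi_g=\eta_g\mu_g$. Your proposal never invokes this module structure or any substitute for it, and the two steps you lean on instead are precisely the assertions that need proof. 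Take the strong-distinctness step: you say that ``an equivariant function is determined by its values on orbit representatives, and the Galois coordinates give enough room to prescribe those values.'' The first half is true, but the second is not: the value of $f\in A(X)$ at $x$ cannot be prescribed freely, since $\gamma_h(x)=x$ forces $\beta_h(f(x)1_{h^{-1}})=f(x)1_h$, i.e.\ $f(x)$ must be fixed by the stabilizer of $x$ inside the isotropy group of $r(g)$. So to separate $x$ from a point $x'=\gamma_h(x)$ lying in the \emph{same} orbit and the same fibre $X_g$, you must produce a stabilizer-invariant $a\in E_g$ with $a\pi\neq\beta_h(a)\pi$ for every nonzero idempotent $\pi$ --- which is precisely the assertion that the invariant subalgebra of the stabilizer is $\beta$-strong. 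In the paper that statement is a \emph{downstream consequence} of Lemma \ref{xconj2} (via Lemmas \ref{isombeta} and \ref{rdecomp}), so your route is either circular or amounts to re-proving the cited Galois theorem from scratch; the single identity $\sum_i r_i\beta_g(s_i1_{g^{-1}})=\delta_{e,g}1_e$ does not hand it to you.

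The surjectivity step has the same problem. You assert that ``the Galois hypothesis yields a section producing, for any target tuple, a preimage $z\in E_g\otimes A(X)$, exactly as in Lemma \ref{lema23}(ii)$\Rightarrow$(iii).'' But that argument takes as input elements $x_{ix}\in E_g$, $f_{ix}\in A(X)$ with $\sum_i x_{ix}f_{ix}(y)=\delta_{x,y}1_g$ --- Galois-type coordinates \emph{for} $A(X)$ --- whereas what you actually have are $\beta$-Galois coordinates $r_i,s_i\in R$ for $R$ over $K$, living in $R$, not in $A(X)$. Converting the latter into the former (equivalently, constructing the section) is the entire difficulty: the standard construction sends $m\in Map(X,R)$ to $\sum_i r_i\otimes t(s_im)$, where $t(m)=\sum_{g\in G}(1_g\delta_g)m$ is the trace for the $R\star_\beta G$-module structure, and verifying that this lands in $R\otimes A(X)$ and inverts $\mu$ is exactly the proof of the theorem the paper cites. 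Without either citing \cite[Theorem 5.3]{bagio} or carrying out that construction, your argument does not close. (A smaller point: had you actually established surjectivity, the rank equality and projectivity, then \cite[Corollaire I.2.4]{knus} would give the isomorphism directly, so your detour back through Lemma \ref{xconj}(i) is redundant even within your own plan.)
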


\begin{proof}  Since $Map(X,R)^G=A(X)$, it follows from \cite[Theorem 5.3]{bagio} that the map  $\mu: R \otimes A(X) \to Map(X, R)$ given by
$\mu(r \otimes f) = rf$ is an isomorphism of $R$-algebras, which clearly induces an isomorphism $\mu_g: E_g \otimes A(X)) \to Map(X_g,E_g)$.
On the other hand, $Map(X_g,E_g)\simeq \prod_{x \in X_g}E_{x}$, as $R$-algebras, via the map $\eta_g:f\mapsto (f(x))_{x \in X_g}$.
Since $\varphi_g=\eta_g\mu_g$, the result follows.
\end{proof}

\vu


\section{The Galois-Grothendieck-type Correspondence}

We start recalling that $G$, $R$, $X$, $\beta$ and $\gamma$ are as in the previous section.
Let $V(X)=\bigcup_{e\in G_0}V_e(X)=\{\rho_x\ |\ x\in X_e,\ e\in G_0\}=\{\rho_x\ |\ x\in X_g,\ g\in G\}$.

\vd

Let $Y$ and $W$ be $G$-sets via the actions $\varepsilon = \{\varepsilon_g: Y_{g^{-1}} \to Y_g\}_{g \in G}$ and
$\vartheta = \{\vartheta_g: W_{g^{-1}} \to W_g\}_{g \in G}$, respectively. A map $\psi: Y \to W$ is said an \emph{isomorphism of  $G$-sets} if
the following conditions are satisfied:

\begin{itemize}
\item [(i)] $\psi$ is a bijection;

\vu

\item [(ii)] $\psi(Y_g) = W_g$, for all $g \in G$;

\vu

\item[(iii)] $\psi(\varepsilon_g(y)) = \vartheta_g(\psi(y))$, for all $y \in Y_{g^{-1}}$ and $g \in G$.
\end{itemize}

\vu

\begin{lema}\label{lema1} Assume that $R$ is a $\beta$-Galois extension of $K$. Then:
\begin{itemize}
\item[(i)] $V(X)$ is a $G$-split set;

\vu

\item[(ii)]  The elements of $V_g(X)$ are pairwise strongly distinct, for every $g \in G$,;

\vu

\item[(iii)] The map $\omega: X \to V(X)$, given by $\omega(x) = \rho_x$, is an isomorphism of $G$-sets.
\end{itemize}
\end{lema}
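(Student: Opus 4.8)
The plan is to establish the injectivity of $\omega$ first, since all three statements follow from it together with the isomorphisms $\varphi_g$ of Lemma \ref{xconj2} and the compatibility relation $\beta_g(f(x))=f(\gamma_g(x))$ (for $f\in A(X)$, $x\in X_{g^{-1}}$) already recorded above. Since $\omega$ is surjective onto $V(X)$ by the very definition of $V(X)$, injectivity makes it a bijection carrying each $X_g$ onto $V_g(X)$, which is exactly the block structure needed for (i) and (iii), and it forces $\#V_g(X)=\#X_g$.

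First I would prove that $\omega$ is injective. Across distinct blocks this is clear: evaluating the identity $\sum_{e\in G_0}1'_e$ of $A(X)$ at a point $x\in X_e$ gives $\rho_x(\sum_{e}1'_e)=1_e$, and the identities $1_e$ of the distinct summands $E_e$ of $R=\bigoplus_{e\in G_0}E_e$ are pairwise distinct, so $\rho_x\neq\rho_{x'}$ whenever $x,x'$ lie in different blocks. The essential case is that of two points $x_1\neq x_2$ lying in the same $X_g$, which I would treat by contradiction: if $\rho_{x_1}=\rho_{x_2}$, that is $f(x_1)=f(x_2)$ for every $f\in A(X)$, then every element in the image of $\varphi_g$ has equal $x_1$- and $x_2$-coordinates, so that image is contained in the proper $R$-subalgebra of $\prod_{x\in X_g}E_x$ cut out by $a_{x_1}=a_{x_2}$ (proper because $1_g\neq 0$). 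This contradicts the surjectivity of $\varphi_g$ granted by Lemma \ref{xconj2}. Hence $\omega$ is injective, and therefore bijective.

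Next I would handle (i) and (iii) together by transporting the action intrinsically. For $g\in G$ define $\bar\gamma_g\colon V_{g^{-1}}(X)\to V_g(X)$ by $\bar\gamma_g(\rho)=\beta_g\circ\rho$; the compatibility relation gives $\bar\gamma_g(\rho_x)=\rho_{\gamma_g(x)}$, so $\bar\gamma_g$ is well defined and lands in $V_g(X)$. Because $\beta$ is an action one has $\beta_e=\mathrm{id}$ and $\beta_g\beta_h=\beta_{gh}$, from which $\bar\gamma_e=\mathrm{id}$, $\bar\gamma_{g^{-1}}\bar\gamma_g=\mathrm{id}$ and $\bar\gamma_g\bar\gamma_h=\bar\gamma_{gh}$ on the appropriate domains follow immediately; thus each $\bar\gamma_g$ is a bijection and $\{\bar\gamma_g\}$ is an action of $G$ on $V(X)$. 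Since $\omega$ is a bijection with $\omega(X_g)=V_g(X)$ and $X=\dot{\bigcup}_{e\in G_0}X_e$, we obtain $V(X)=\dot{\bigcup}_{e\in G_0}V_e(X)$, proving that $V(X)$ is a $G$-split set (i). Moreover $\omega$ is bijective, satisfies $\omega(X_g)=V_g(X)$, and intertwines the actions, $\omega(\gamma_g(x))=\rho_{\gamma_g(x)}=\bar\gamma_g(\omega(x))$, which is precisely (iii).

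For (ii) I would use the surjectivity of $\varphi_g$ once more. Fixing $x_0\in X_g$ and choosing a preimage $\sum_i r_i\otimes f_i$ of the tuple with $1_g$ in the $x_0$-slot and $0$ elsewhere yields $\sum_i r_i f_i(x)=\delta_{x,x_0}1_g$ for all $x\in X_g$; by the injectivity already established this reads $\sum_i r_i\,\rho_x(f_i)=\delta_{\rho_{x_0},\rho_x}1_g$, which is exactly condition (ii) of Proposition \ref{livre} for the family $V_g(X)$. The implication (ii)$\Rightarrow$(iii)$\Rightarrow$(i) of that proposition then shows that the elements of $V_g(X)$ are pairwise strongly distinct. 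The main obstacle is the within-block injectivity of $\omega$: everything else is either a formal check of the groupoid-action axioms or a direct reading-off of a dual basis, whereas separating two points inside a single $X_g$ is the one place where the hypothesis that $R$ is $\beta$-Galois is genuinely used, entering through the surjectivity of $\varphi_g$.
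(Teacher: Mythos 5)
Your proposal is correct and takes essentially the same route as the paper: the $G$-action on $V(X)$ is exactly the paper's $\sigma_g(\rho_x)=\beta_g\circ\rho_x=\rho_{\gamma_g(x)}$, injectivity of $\omega$ on each block is extracted from the surjectivity of $\varphi_g$ (Lemma \ref{xconj2}), and strong distinctness in (ii) comes from reading off the dual-basis condition and invoking Proposition \ref{livre}. The only differences are organizational refinements — you establish injectivity first (so that $\delta_{x,y}$ can legitimately be rewritten as $\delta_{\rho_x,\rho_y}$ before applying Proposition \ref{livre}) and you make the cross-block separation of the $\rho_x$ explicit, two small points the paper leaves implicit.
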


\begin{proof} (i) Take $\sigma = \{\sigma_g: V_{g^{-1}}(X) \to V_g(X)\}_{g \in G}$, where $\sigma_g(\rho_x)(f) =
\beta_g(f(x))$, for every $x \in X_{g^{-1}}$. Observe that $f \in A(X)$, hence $\sigma_g(\rho_x)(f) = \beta_g(f(x)) =
f(\gamma_g(x)) = \rho_{\gamma_g(x)}(f)$ and, consequently,  $\sigma_g(\rho_x) \in V_g(X)$, showing that the map $\sigma_g$ is well-defined.
Moreover, $\sigma_g$ is a bijection with inverse $\sigma_{g^{-1}}$, for every $g\in G$. It is immediate to
check that $\sigma$ is an action of $G$ on $V(X)$, and $V(X)=\dot{\bigcup}_{e\in G_0}V_g(X)$ by construction.

\vd

(ii) It follows from Lemma \ref{xconj2} that, for every $g\in G$, the map $\varphi_g: E_g\otimes A(X) \to \prod_{x \in X_g}E_{x}$,
given by $\varphi_g(r \otimes f) = (rf(x))_{x \in X_g}$, is an isomorphism of $R$-algebras.  Thus, for each $x \in X_g$, there exist
$r_{ix} \in E_g$ and $f_{ix} \in A(X)$, $1 \leq i \leq m_x$, such that $(\sum_{i=1}^{m_x}r_{ix}f_{ix}(y))_{y \in X_g} = (\delta_{x,y}1_g)_{y \in X_g}$.
Hence,  $\sum_{i=1}^{m_x}r_{ix}\rho_y(f_{ix}) = \sum_{i=1}^{m_x}r_{ix}f_{ix}(y) = \delta_{x,y}1_g$, for every $y \in X_g$, and the
assertion follows by Proposition \ref{livre}.

\vd

(iii) Consider the surjective map  $\omega_g: X_g \to V_g(X)$ given by $\omega_g(x) = \rho_x$, for every $x \in X_g$.
Indeed, $\omega_g$ is a bijection. If $\rho_x = \rho_y$, for $x, y \in X_g$, then $f(x) = f(y)$, for every $f \in A(X)$.

\vu

On the other hand, the map $\eta_g: Map(X_g,E_g) \to \prod_{x \in X_g}E_{x}$, given by $\eta_g(f) = (f(x))_{x \in X_g}$, is an isomorphism of
$R$-algebras, whose inverse is the map $\eta'_g: \prod_{x \in X_g}E_{x} \to Map(X_g,E_g)$ given by $\eta'_g(r)(x) = r_x$,
where $r = (r_x)_{x \in X_g} \in \prod_{x \in X_g}E_{x}$. Furthermore, the map $\varphi_g: E_g \otimes A(X)\to \prod_{x \in X_g}E_{x}$,
given by $\varphi_g(r \otimes f) = (rf(x))_{x \in X_g}$, is also an isomorphism of $R$-algebras, by Lemma \ref{xconj2}.

\vu

Thus, $E_g \otimes A(X) \simeq \prod_{x \in X_g}E_{x} \simeq Map(X_g,E_g)$, and so, for every
$p \in Map(X_g,E_g)$, there exists $\lambda=\sum_{1\leq i\leq m}r_i\otimes f_i \in E_g\otimes A(X)$ such that $p =
\eta'_g \circ \varphi_g(\lambda)$. Consequently, $$p(x) =(\eta'_g\circ \varphi_g (\lambda))(x) =
\eta'_g((\sum_{1\leq i\leq m}r_if_i(z))_{z \in X_g})(x) = \sum_{1\leq i\leq m}r_if_i(x) =
\sum_{1\leq i\leq m}r_if_i(y) = p(y),$$ for every $p\in Map(X_g,E_g)$. So, $x = y$.

\vu

Therefore,  the map $\omega: X\to V(X)$, given by $\omega(x) = \omega_g(x)$ if $x \in X_g$,
is also a bijection, and $\omega(X_g)=V_g(X)$.

\vu

Finally, $\omega$ commutes with  the actions $\sigma$ and $\gamma$. Indeed, for $x \in X_{g^{-1}}$ and $f\in A(X)$, we have
$$\omega(\gamma_g(x))(f) = \rho_{\gamma_g(x)}(f) = f(\gamma_g(x)) = \beta_g(f(x)) = \sigma_g(\rho_x)(f)
 = \sigma_g(\omega(x))(f),$$ which concludes the proof.
\end{proof}

\vu

For any $K$-algebras $B$ and $C$, we will denote by $Alg_K(B,C)$ the set of all $K$-algebra homomorphisms from $B$ into $C$.

\vu

\begin{lema}\label{lema2} Let $B$ be a $K$-algebra and $g\in G$. Suppose that $E_g$ is faithfully projective and
there exists an isomorphism of $E_g$-algebras $\varphi_g: E_g \otimes B \to (E_g)^{n_g}$, $n_g \geq 1$. Then:

\vu

\begin{itemize}

\item[(i)] $B$ is faithfully projective over $K$ with constant rank $n_g$;

\vu

\item[(ii)] $B$ is a strongly separable extension of $K$;

\vu

\item [(iii)] There exist $\varphi_{(g,1)},\ldots, \varphi_{(g,n_g)} \in Alg_K (B, E_g)$ such that $\varphi_g(r \otimes b) =
(r\varphi_{(g,i)}(b))_{1\leq i\leq n}$ for every $r \in E_g$ and $b \in B$;

\vu

\item [(iv)] The elements of $V_g(B) = \{\varphi_{(g,i)}|$ $1 \leq i \leq n_g\}$ are pairwise strongly distinct;

\vu

\item [(v)] $V_g(B) = Alg_K(B, E_g)$ whenever the elements of $Alg_K(B, E_g)$ are pairwise strongly distinct.
\end{itemize}
\end{lema}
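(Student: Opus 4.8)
The plan is to establish the five assertions in order, each resting on its predecessors and on the Dedekind-type results of Section~2. The unifying tool is that $E_g$, being faithfully projective over $K$, is faithfully flat, so module- and algebra-theoretic properties of the base change $E_g\otimes B\cong(E_g)^{n_g}$ descend to $B$. For (i) I would first compute ranks: for a prime $\mathfrak{q}$ of $E_g$ lying over a prime $\mathfrak{p}$ of $K$ one has $(E_g\otimes B)\otimes_{E_g}\kappa(\mathfrak{q})\cong B\otimes_K\kappa(\mathfrak{q})$, whereas $\varphi_g$ gives $(E_g)^{n_g}\otimes_{E_g}\kappa(\mathfrak{q})\cong\kappa(\mathfrak{q})^{n_g}$; comparing dimensions, and using that faithful flatness forces every $\mathfrak{p}$ to be dominated by some $\mathfrak{q}$, I obtain $rank_{K_\mathfrak{p}}B_\mathfrak{p}=n_g$ for all $\mathfrak{p}$. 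Faithfully flat descent (as available in \cite{knus}) then shows that $B$ is finitely generated and projective over $K$, and constant rank $n_g\geq1$ makes $B$ faithful, giving (i). For (ii) it remains to check separability: $(E_g)^{n_g}$ is separable over $E_g$, hence so is $E_g\otimes B$, and descent of separability along the faithfully projective extension $K\to E_g$ (exactly as \cite[Proposition III.2.2]{knus} was used in Lemma~\ref{lema23}) yields that $B$ is separable over $K$; together with (i) this is strong separability.

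For (iii) I would set $\varphi_{(g,i)}(b):=\pi_i(\varphi_g(1_g\otimes b))$, where $\pi_i:(E_g)^{n_g}\to E_g$ is the $i$-th projection; each $\varphi_{(g,i)}$ is a composite of $K$-algebra homomorphisms, so $\varphi_{(g,i)}\in Alg_K(B,E_g)$. Since $r\otimes b=r\cdot(1_g\otimes b)$ and $\varphi_g$ is $E_g$-linear with $E_g$ acting componentwise on $(E_g)^{n_g}$, the formula $\varphi_g(r\otimes b)=(r\,\varphi_{(g,i)}(b))_{1\leq i\leq n_g}$ follows immediately. For (iv) the key point is that $\varphi_g$ is surjective: choosing for each $i$ a preimage $\varphi_g^{-1}(e_i)=\sum_k r_{ki}\otimes b_{ki}$ of the standard idempotent $e_i$ and applying the formula of (iii) gives $\sum_k r_{ki}\,\varphi_{(g,j)}(b_{ki})=\delta_{ij}1_g$ for every $j$. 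This is precisely condition (ii) of Proposition~\ref{livre} for the family $V_g(B)$, all of whose members are attached to the single idempotent $1_g$; since $B$ is separable by (ii), that proposition returns its condition (i), namely that the elements of $V_g(B)$ are pairwise strongly distinct. In particular they are distinct, so $\#V_g(B)=n_g$.

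Finally, for (v) I always have the inclusion $V_g(B)\subseteq Alg_K(B,E_g)$. Assuming the elements of $Alg_K(B,E_g)$ are pairwise strongly distinct, I apply Corollary~\ref{lemacoro} with $T=B$ (strongly separable by (ii)) and the family $Alg_K(B,E_g)$, all attached to $1_g$: it bounds $\#Alg_K(B,E_g)\leq rank_{K_\mathfrak{p}}B_\mathfrak{p}=n_g$. Combined with $\#V_g(B)=n_g$ from (iv) and the inclusion, this forces $V_g(B)=Alg_K(B,E_g)$. I expect the main obstacle to be the descent step (i)--(ii): one must invoke faithful flatness of $E_g$ correctly and keep careful track of the change of rings in the rank computation, since the direct-summand device used in Lemma~\ref{lema23} is unavailable here. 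Once $B$ is known to be strongly separable, parts (iii)--(v) are a clean application of the Section~2 machinery.
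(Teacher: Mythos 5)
Your proof is correct, and parts (iii)--(v) coincide with the paper's own argument: the same maps $\varphi_{(g,i)}=\pi_{(g,i)}\varphi_g\eta_g$, the same use of surjectivity of $\varphi_g$ to produce the Kronecker-delta relations $\sum_l r_{il}\varphi_{(g,j)}(b_{il})=\delta_{i,j}1_g$ that feed Proposition \ref{livre}, and the same counting $\#Alg_K(B,E_g)\leq rank_KB=n_g=\#V_g(B)$ via Corollary \ref{lemacoro}. Where you genuinely diverge is in (i)--(ii). The paper disposes of these by saying they follow ``by the same arguments used in the proof of Lemma \ref{lema23}((iii)$\Rightarrow$(i))''; in that proof the rank of $T$ is computed by localizing the isomorphism (essentially your fiber computation), but finite generation and projectivity are obtained by exhibiting $T\simeq K1_v\otimes T$ as a direct summand of $E_v\otimes T\simeq (E_v)^n$, an argument that rests on the hypothesis of Lemma \ref{lema23} that $K$ is a direct summand of $R$ as a $K$-module. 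You instead use faithfully flat descent: $E_g$ faithfully projective makes $K\to E_g$ faithfully flat, so finite projectivity descends from $E_g\otimes B\simeq(E_g)^{n_g}$ to $B$, and separability descends as well (via \cite[Proposition III.2.2]{knus}, exactly as the paper uses it). This buys something real: Lemma \ref{lema2} as stated does not assume $K$ to be a direct summand of $R$ (that hypothesis is only recovered in the later applications, where $R$ is a $\beta$-Galois extension), so your descent argument is self-contained under the stated hypotheses, whereas the paper's back-reference silently imports an extra assumption. One small ordering quibble: in (i) you compare fiber dimensions before knowing that $B_{\mathfrak{p}}$ is free, so strictly you should first conclude from descent that $B$ is finitely generated projective over $K$ and only then read off the constant rank $n_g$ from the fibers; the mathematical content is unaffected.
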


\begin{proof}  The assertions (i) and (ii) follows by the same arguments used in the
proof  of Lemma \ref{lema23}((iii)$\Rightarrow$(i)).

\vd

(iii) Denote by $\eta_g: B \to E_g \otimes B$ the map given by $b\mapsto 1_g \otimes b$, and by
$\pi_{(g,i)}: (E_g)^n \to E_g$ the $i^{th}$-projection, for every $1\leq i\leq n_g$. Clearly, the maps $\varphi_{(g,i)}:= \pi_{(g,i)}\varphi_g\eta_g$
are in $Alg_K(B,E_g)$ and it is easy to see that  $\varphi_g(r \otimes b)=(r\varphi_{(g,i)}(b))_{1\leq i\leq n_g} $, for all $r\in E_g$ and $b\in B$.

\vd

(iv) Since $\varphi_g$ is an isomorphism, for each $1 \leq i \leq n_g$, there exist $r_{il} \in E_g$ and $b_{il} \in B$,
$1 \leq l \leq m_g$, such that $\varphi_g(\sum_{l=1}^{m_g}r_{il} \otimes b_{il}) = (\sum_{l=1}^{m_g}r_{il}\varphi_{(g,j)}(b_{il}))_{1\leq j\leq n_g}
= (\delta_{i,j}1_g)_{1\leq j\leq n_g}$, that is, $\sum_{l=1}^{m_g}s_{il}\varphi_{(g,j)}(b_{il}) = \delta_{i,j}1_g$, for every $1 \leq j \leq n_g$.
Consequently, the elements of $V_g(B)$ are pairwise strongly distinct, by (ii) and Proposition \ref{livre}.

\vd

(v) Suppose that the elements of $Alg_K(B, E_g)$ are pairwise strongly distinct. Then, by (i), (ii) and Corollary \ref{lemacoro},
$\# Alg_K(B, E_g) \leq rank_KB=n_g=\#V_g(B)\leq\# Alg_K(B,E_g)$. Thus,
$V_g(B) = Alg_K(B, E_g)$.
\end{proof}

\vu

The next lemma provide us a necessary and sufficient condition for the set  $V(B) = {\bigcup}_{e \in G_0} V_e(B)$ to be a $G$-set. Again here,
this union is disjoint and finite by construction.

\vu

\begin{lema} Let $B$, $E_g$, $\varphi_g$ and $V_g(B)$ $(g\in G)$, be as in Lemma \ref{lema2}. Then the following conditions are equivalent:

\begin{itemize}

\vu

\item [(i)] $V(B)$ is a $G$-set via $\xi = \{\xi_g: V_{g^{-1}}(B) \to V_g(B)\}_{g \in G}$, with $\xi_g(\varphi_{(g^{-1},i)})(b) =
\beta_g(\varphi_{(g^{-1},i)}(b))$, for every $b\in B$;

\vu

\item [(ii)] For every $g, h \in G$ with $r(g) = r(h)$, given $\varphi_{(g^{-1},i)}$ and $\varphi_{(g^{-1},j)}$ in $V_{g^{-1}}(B)= V_{h^{-1}}(B)$,
the elements $\xi_g(\varphi_{(g^{-1},i)})$ and $\xi_h(\varphi_{(g^{-1},j)})$ are strongly distinct.
\end{itemize}
\end{lema}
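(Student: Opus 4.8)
The plan is to separate the \emph{well-definedness} of the maps $\xi_g$ from the verification of the groupoid-action axioms, and to observe that only the former is substantial. For each $g$ put $\psi=\xi_g(\varphi_{(g^{-1},i)})=\beta_g\circ\varphi_{(g^{-1},i)}\colon B\to E_g$; a priori this only lies in $Alg_K(B,E_g)$, and the whole point is whether it lands in $V_g(B)=V_{r(g)}(B)$. Once we know $\xi_g(V_{g^{-1}}(B))\subseteq V_{r(g)}(B)$ for \emph{every} $g$, everything else is automatic: from $\beta_{g^{-1}}\beta_g=\beta_{d(g)}=Id$ on $E_{g^{-1}}$ one gets $\xi_{g^{-1}}\circ\xi_g=Id$ and $\xi_g\circ\xi_{g^{-1}}=Id$, so each $\xi_g$ is a bijection with inverse $\xi_{g^{-1}}$; from $\beta_g\beta_h=\beta_{gh}$ for $(g,h)\in G^2$ and $\beta_e=Id$ for $e\in G_0$ one gets $\xi_g\circ\xi_h=\xi_{gh}$ and $\xi_e=Id_{V_e(B)}$. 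Thus condition (i) is equivalent to the single inclusion $\xi_g(V_{g^{-1}}(B))\subseteq V_{r(g)}(B)$ for all $g$, and the proof reduces to linking this inclusion with (ii).

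For (i)$\Rightarrow$(ii), assuming $\xi$ is a well-defined action, both $\xi_g(\varphi_{(g^{-1},i)})$ and $\xi_h(\varphi_{(h^{-1},j)})$ lie in the common set $V_{r(g)}(B)=V_{r(h)}(B)$, whose elements are pairwise strongly distinct by Lemma \ref{lema2}(iv). Hence any two distinct elements produced by the $\xi$'s are strongly distinct, which is exactly (ii).

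For (ii)$\Rightarrow$(i) I would prove the inclusion by a maximality/counting argument. Write $e=r(g)$ and $n=\#V_e(B)$, and recall from Lemma \ref{lema2}(i),(ii) that $B$ is a strongly separable extension of $K$ with $rank_{K}B=n$ (the isomorphism $\varphi_e$ forces $rank_{K}B=n_e=n$). Fix $\psi=\xi_g(\varphi_{(g^{-1},i)})\in Alg_K(B,E_e)$ and note that each $\varphi_{(e,k)}\in V_e(B)$ equals $\xi_e(\varphi_{(e,k)})$, so (ii) applied with $h=e$ (which satisfies $r(h)=e=r(g)$) tells us that $\psi$ is strongly distinct from every $\varphi_{(e,k)}$ with which it does not coincide. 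If $\psi\notin V_e(B)$, then $\psi$ is strongly distinct from all $n$ elements of $V_e(B)$, so $V_e(B)\cup\{\psi\}$ is a family of $n+1$ pairwise strongly distinct homomorphisms $B\to E_e$; but Corollary \ref{lemacoro} bounds any such family by $rank_{K_{\mathfrak{p}}}B_{\mathfrak{p}}=n$, a contradiction. Hence $\psi\in V_e(B)$, the inclusion holds for every $g$, and (i) follows by the reduction in the first paragraph.

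The main obstacle is precisely this last inclusion: converting the separation hypothesis (ii) into genuine membership $\psi\in V_e(B)$. The mechanism is that $V_e(B)$ is already a \emph{maximal} pairwise strongly distinct family in $Alg_K(B,E_e)$, its cardinality $n$ meeting the rank bound of Corollary \ref{lemacoro}, so no strongly distinct newcomer can be adjoined. Two minor points deserve care: condition (ii) must be read as ``distinct implies strongly distinct'' (no homomorphism is strongly distinct from itself), and all the ranks $n_g$ coincide because $B$ is faithfully projective of rank $n_g$ for every $g\in G$; neither point needs anything beyond the results already established.
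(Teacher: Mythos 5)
Your proof is correct, and it runs on the same engine as the paper's: the rank bound of Corollary \ref{lemacoro}, which makes $V_e(B)$ a \emph{maximal} pairwise strongly distinct family in $Alg_K(B,E_e)$ because $\#V_e(B)=rank_KB$. The organization, however, is genuinely different, and in one respect better. The paper frames (ii)$\Rightarrow$(i) as a surjectivity problem: it forms the union $Y_g(B)=\bigcup_{\{h\,|\,r(h)=r(g)\}}\xi_h(V_{g^{-1}}(B))$, checks (using (ii) together with Lemma \ref{lema2}(iv)) that its elements are pairwise strongly distinct, notes $V_g(B)=\xi_{r(g)}(V_{r(g)^{-1}}(B))\subseteq Y_g(B)$, and then equates cardinalities. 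But the paper also asserts ``Clearly, $Y_g(B)\subseteq V_g(B)$'', which is not clear at all --- a priori the images $\beta_h\circ\varphi_{(h^{-1},j)}$ are merely elements of $Alg_K(B,E_g)$, and this inclusion is precisely what the counting step has to deliver. Your proof isolates exactly that point: you identify well-definedness, i.e.\ $\xi_g(V_{g^{-1}}(B))\subseteq V_{r(g)}(B)$, as the sole content of (ii)$\Rightarrow$(i), establish it by adjoining a single putative outsider $\psi$ to $V_e(B)$ and contradicting Corollary \ref{lemacoro} (only the comparison with $h=r(g)$, where $\xi_{r(g)}=Id$, is needed from hypothesis (ii)), and then obtain bijectivity for free from $\xi_{g^{-1}}\circ\xi_g=\xi_{d(g)}=Id$ instead of from a cardinality count. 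So both proofs rest on the same key lemma, but your membership-plus-two-sided-inverse scheme replaces the paper's injectivity-plus-surjectivity scheme, and it supplies a rigorous argument at the one spot where the paper is hand-wavy. Your two flagged caveats --- reading (ii) as ``distinct implies strongly distinct'', and the constancy of the ranks $n_g$ --- are both genuinely needed and correctly discharged; the paper tacitly relies on the same readings.
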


\begin{proof} (i) $\Rightarrow$ (ii) It is enough to notice that if $r(g)=r(h)$ then $V_g(B)=V_h(B)$. Now, the assertion follows from Lemma \ref{lema2}(iv).

\vd

(ii) $\Rightarrow$ (i) It is enough to show that each $\xi_g$, $g\in G$, is a bijection for the conditions (i)-(ii) of the definiton of a groupoid
action are straightforward. Also, each $\xi_g$  is injective by construction, thus it is enough to prove that it is surjective.

\vu

We start by noticing that the elements of $V_{g^{-1}}(B)$ are pairwise strongly distinct, by Lemma \ref{lema2}. Consequently,
the elements of $\xi_g(V_{g^{-1}}(B))$ are pairwise strongly distinct and it follows from the assumption that also the elements of
$Y_g(B) = \bigcup_{\{h\in G|r(h)=r(g)\}}\xi_h(V_{g^{-1}}(B))$ are pairwise strongly distinct.

\vu

Clearly, $Y_g(B)\subseteq V_g(B)$, and noting that $r(r(g))=r(g)$ and $V_g(B)=V_{r(g)}(B)=V_{r(g)^{-1}}(B)=\xi_{r(g)}(V_{r(g)^{-1}}(B))$,
 we have that $V_g(B)\subseteq Y_g(B)$, for every $g\in G$.

\vu

Furthermore, $\xi_g(V_{g^{-1}}(B)) \subseteq Y_{g}(B)=V_g(B)$ and by Lemma \ref{lema2} $\#\xi_g(V_{g^{-1}}(B))=\#V_{g^{-1}}(B)=n_{g^{-1}}=rank_KB=n_g =\#V_g(B)$.
Hence, $\xi_g(V_{g^{-1}}(B)) = V_g(B)$, and $\xi_g$ is a bijection.
\end{proof}

\vd

Assume that $S = \bigoplus_{j=1}^n S_j$ is a $K$-algebra, where $S_j = S1_j$ and $\{1_j\}_{1 \leq j \leq n}$ are pairwise orthogonal central idempotents in $S$,
for some $n \geq 1$. An $K$-algebra $T$ is said to be $S$-\emph{split} if:

\begin{itemize}

\vu

\item [(i)] For each $1 \leq j \leq n$, there exists an isomorphism of $K$-algebras $\phi_j: S_j \otimes T \to (S_j)^m$, for some given $m \geq 1$;

\vu

\item [(ii)] $V(T) = \bigcup_{j=1}^n V_j(T)$ is a $G$-set, where $V_j(T)$ is defined as in Lemma \ref{lema2}.
\end{itemize}

\vu

\nd Notice that (i) is equivalent to say that $S \otimes T \simeq S^m$ and, in particular, $V(T)$ is a finite $G$-split set.

\vu

\begin{lema}\label{bijgro} Let $B$, $E_g$, $\varphi_g$ and $V_g(B)$  $(g\in G)$ be as in Lemma \ref{lema2}. Assume that $R$ is a $\beta$-Galois extension
of $K$ and $V(B)$ is a $G$-set via
$\xi = \{\xi_g: V_{g^{-1}}(B) \to V_g(B)\}_{g \in G}$. Then, the mapping $\nu: B \to A(V(B))$, given by $\nu(b)(\varphi_{(g,i)}) = \varphi_{(g,i)}(b)$,
for $b \in B$ and $\varphi_{(g,i)} \in V(B)$, is an isomorphism of $K$-algebras.
\end{lema}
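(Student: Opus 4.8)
The plan is to exhibit $\nu$ as a descent of a map assembled from the two isomorphisms already available, namely $\varphi_g$ of Lemma \ref{lema2}(iii) and the Galois isomorphism of Lemma \ref{xconj2} instantiated at the $G$-split set $V(B)$. First I would check that $\nu$ is a well-defined $K$-algebra homomorphism. For $b\in B$ and $\varphi_{(g,i)}\in V_g(B)=V(B)_g$ we have $\nu(b)(\varphi_{(g,i)})=\varphi_{(g,i)}(b)\in E_g$, so $\nu(b)\in Map(V(B),R)$; and, for $\varphi\in V_{g^{-1}}(B)$, the defining relation $\xi_g(\varphi)(b)=\beta_g(\varphi(b))$ gives $\nu(b)(\xi_g(\varphi))=\beta_g(\varphi(b))=\beta_g(\nu(b)(\varphi))$, which is exactly the invariance condition characterizing $A(V(B))$. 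Hence $\nu(b)\in A(V(B))$. Since each $\varphi_{(g,i)}\in Alg_K(B,E_g)$ and the operations on $A(V(B))$ are pointwise, $\nu$ is a homomorphism of $K$-algebras.

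The key step is a factorization. Fix $g\in G$ and write $\varphi_g\colon E_g\otimes B\to (E_g)^{n_g}$, $\varphi_g(r\otimes b)=(r\varphi_{(g,i)}(b))_{1\le i\le n_g}$, for the isomorphism of Lemma \ref{lema2}(iii). Because $R$ is $\beta$-Galois and $V(B)$ is a finite $G$-split set, Lemma \ref{xconj2} applied to $X=V(B)$ provides an isomorphism of $R$-algebras $\tilde\varphi_g\colon E_g\otimes A(V(B))\to\prod_{x\in V_g(B)}E_x$, $\tilde\varphi_g(r\otimes f)=(rf(x))_{x\in V_g(B)}$; identifying $V_g(B)=\{\varphi_{(g,i)}\mid 1\le i\le n_g\}$ and $E_x=E_g$, this codomain is exactly $(E_g)^{n_g}$. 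A direct computation shows $\tilde\varphi_g\circ(\mathrm{id}_{E_g}\otimes\nu)=\varphi_g$: indeed $\tilde\varphi_g(r\otimes\nu(b))=(r\,\nu(b)(\varphi_{(g,i)}))_i=(r\varphi_{(g,i)}(b))_i=\varphi_g(r\otimes b)$. As $\varphi_g$ and $\tilde\varphi_g$ are isomorphisms, $\mathrm{id}_{E_g}\otimes\nu$ is an isomorphism for every $g\in G$.

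Finally I would descend from $R$ to $K$. Taking $g=e$ ranging over $e\in G_0$ and using $R=\bigoplus_{e\in G_0}E_e$, the maps $\mathrm{id}_{E_e}\otimes\nu$ assemble into $\mathrm{id}_R\otimes\nu\colon R\otimes B\to R\otimes A(V(B))$, which is therefore an isomorphism. Since $R$ is $\beta$-Galois the trace map is surjective, so $K$ is a direct summand of $R$ as $K$-modules; writing $R=K\oplus R'$, the natural decompositions $R\otimes B\cong B\oplus(R'\otimes B)$ and $R\otimes A(V(B))\cong A(V(B))\oplus(R'\otimes A(V(B)))$ make $\mathrm{id}_R\otimes\nu$ block-diagonal, equal to $\nu\oplus(\mathrm{id}_{R'}\otimes\nu)$. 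A block-diagonal map is bijective if and only if each block is, so $\nu$ is bijective, hence an isomorphism of $K$-algebras. I expect the main obstacle to be bookkeeping rather than conceptual: correctly identifying the codomain $\prod_{x\in V_g(B)}E_x$ with $(E_g)^{n_g}$ so that the triangle $\tilde\varphi_g\circ(\mathrm{id}_{E_g}\otimes\nu)=\varphi_g$ genuinely commutes, and verifying that the section's machinery (the induced action $\alpha$ and the algebra $A(X)$) is legitimately instantiated at $X=V(B)$; once that is in place, the passage from "$\mathrm{id}_R\otimes\nu$ is an isomorphism" to "$\nu$ is an isomorphism" needs only that $K$ is a direct summand of $R$.
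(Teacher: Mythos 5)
Your proof is correct, but your bijectivity argument takes a genuinely different route from the paper's. The paper splits the work in two: injectivity is proved directly (if $a \neq b$ then $\varphi_g(1_g \otimes a) \neq \varphi_g(1_g \otimes b)$, because $E_g$ is faithfully projective and $\varphi_g$ is an isomorphism), and surjectivity comes from a rank count --- by Lemmas \ref{xconj2} and \ref{lema2} both $B$ and $A(V(B))$ are faithfully projective and separable over $K$ with $rank_K B = \#V_g(B) = rank_K A(V(B))$, so the inclusion $\nu(B) \subseteq A(V(B))$ is an equality by the rank-comparison result \cite[Lemma 1.1]{paques}. You instead get bijectivity in one stroke from the triangle $\tilde\varphi_g \circ (\mathrm{id}_{E_g} \otimes \nu) = \varphi_g$, where $\tilde\varphi_g$ is the isomorphism of Lemma \ref{xconj2} instantiated at $X = V(B)$ (legitimate: once $V(B)$ is a $G$-set, it is a finite $G$-split set, since the union $\bigcup_{e \in G_0} V_e(B)$ is disjoint and finite by construction) and $\varphi_g$ is that of Lemma \ref{lema2}; this makes each $\mathrm{id}_{E_g} \otimes \nu$ an isomorphism, summing over $e \in G_0$ makes $\mathrm{id}_R \otimes \nu$ an isomorphism, and the Galois hypothesis (surjective trace) makes $K$ a $K$-module direct summand of $R$, so the block $\nu$ of the block-diagonal map $\mathrm{id}_R \otimes \nu$ is itself bijective. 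Both routes rest on the same two lemmas, but yours is more self-contained: it avoids the rank bookkeeping and the appeal to the external lemma of \cite{paques}, and it isolates the conceptual point that $\nu$ is the descent, along the split extension $K \subseteq R$, of the comparison of the two trivializations $\varphi_g$ and $\tilde\varphi_g$; the paper's route, in exchange, yields injectivity without any descent. One small remark: your well-definedness check verifies the pointwise condition $\nu(b)(\xi_g(\varphi)) = \beta_g(\nu(b)(\varphi))$, whereas the definition of $A(V(B))$ requires $\alpha_g(\nu(b)1'_{g^{-1}}) = \nu(b)1'_g$, which is what the paper computes; the two conditions are equivalent for elements of $Map(V(B),R)$ by an easy computation with the definition of $\alpha_g$, so your shortcut is sound, but that equivalence deserves a sentence.
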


\begin{proof} We start by checking that $\nu$ is a well defined. Indeed, for $g \in G$, $b \in B$ and $\varphi_{(g,i)} \in V(B)$,
we have
$$
\begin{array}{ccl}
\alpha_g(\nu(b)1_{g^{-1}}')(\varphi_{(g,i)}) & \! = & \! \beta_g \circ \nu(b)1_{g^{-1}}' \circ \xi_{g^{-1}}(\varphi_{(g,i)}) =
\beta_g(\nu(b)(\xi_{g^{-1}}(\varphi_{(g,i)}))1_{g^{-1}})  \\
& \! = & \! \beta_g (\xi_{g^{-1}}(\varphi_{(g,i)})(b)1_{g^{-1}}) = \beta_g(\beta_{g^{-1}}(\varphi_{(g,i)}(b)1_g)1_{g^{-1}})  \\
& \! = & \! \beta_{r(g)}(\varphi_{(g,i)}(b)1_{r(g)}) = \varphi_{(g,i)}(b)1_{r(g)} \\
& \! = & \! \varphi_{(g,i)}(b)1_g = \nu(b)(\varphi_{(g,i)})1_g = \nu(b)1_g'(\varphi_{(g,i)}),
\end{array}
$$
showing that $\nu(b) \in A(V(B))$. Clearly, $\nu$ is an algebra homomorphism. It remains to check that it is a bijection.

\vu

Given $a, b \in B$, if $a \neq b$, then $\varphi_g(1_g \otimes a) \neq \varphi_g(1_g \otimes b)$, since for each $g \in G$, $E_g$ is faithful
over $K$ and $\varphi_g$ is an isomorphism.
Thus, there exists $1\leq i\leq n_g $ such that $\nu(a)(\varphi_{(g,i)}) = \varphi_{(g,i)}(a) \neq \varphi_{(g,i)}(b) = \nu(b)(\varphi_{(g,i)})$.
So, $\nu(a) \neq \nu(b)$ and $\nu$ is injective.

\vd

By Lemmas \ref{xconj2} and \ref{lema2}, the $K$-algebras $A(V(B))$ and $B$ are faithfully projective and separable, and $rank_K A(V(B)) = \#V_g(B)=rank_KB$.
Since, $\nu(B)\simeq B$ as $K$-algebras, it follows from \cite[Lemma 1.1]{paques} that  $\nu(B) = A(V(B))$, so $\nu$ is surjective.
\end{proof}

\vu

Let $_{R-split}\mathfrak{Alg}$ denote the category whose objects are the $R$-split $K$-algebras  and whose morphisms are algebra homomorphisms. Also, let
$_{G-split}\mathfrak{FinSet}$ denote the category whose objects are finite $G$-split sets and whose morphisms are $G$-maps (i.e, maps that commute with the action of $G$).
Let $\theta:_{G-split}\mathfrak{FinSet}\to _{R-split}\mathfrak{Alg}$ and $\theta':_{R-split}\mathfrak{Alg}\to _{G-split}\mathfrak{FinSet}$
the maps given by $X\mapsto A(X)$ and $B\mapsto V(B)$, respectively.

\vu

\begin{teo} \label{gg}\, (\textbf{The Galois-Grothendieck equivalence})\,  Assume that $R$ is a $\beta$-Galois extension of $K$ and
$E_g$ is faithfully projective, for every $g\in G$. Then, $\theta$ is a contravariant functor that induces an equivalence between
the categories $_{G-split}\mathfrak{FinSet}$ and $_{R-split}\mathfrak{Alg}$, with inverse $\theta'$.
\end{teo}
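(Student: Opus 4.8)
The plan is to read the statement as a contravariant equivalence (a duality) witnessed by the pair $\theta,\theta'$, whose unit and counit are exactly the isomorphisms already manufactured in Lemmas~\ref{lema1} and \ref{bijgro}. Thus I would prove three things: first, that $\theta$ and $\theta'$ are well-defined contravariant functors; second, that the bijections $\omega_X\colon X\to V(A(X))$, $x\mapsto\rho_x$, of Lemma~\ref{lema1}(iii) are the components of a natural isomorphism from the identity functor of $_{G-split}\mathfrak{FinSet}$ to $\theta'\theta$; and third, that the isomorphisms $\nu_B\colon B\to A(V(B))$ of Lemma~\ref{bijgro} are the components of a natural isomorphism from the identity functor of $_{R-split}\mathfrak{Alg}$ to $\theta\theta'$. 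Once both composites are naturally isomorphic to the respective identities, $\theta$ is an equivalence with quasi-inverse $\theta'$.

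For the functoriality, the action on objects is almost immediate from the earlier lemmas. By Lemma~\ref{xconj2} one has $E_g\otimes A(X)\simeq\prod_{x\in X_g}E_x\simeq(E_g)^{\#X_g}$, which is condition (i) in the definition of an $R$-split algebra, while Lemma~\ref{lema1}(i) gives that $V(A(X))=V(X)$ is a finite $G$-split set, which is condition (ii); dually, $\theta'(B)=V(B)$ is a finite $G$-split set directly from the definition of $R$-split and the remark after it. On morphisms I would put $\theta(\psi)(f)=f\circ\psi$ for a $G$-map $\psi\colon X\to X'$ and $f\in A(X')$, and $\theta'(h)(\varphi)=\varphi\circ h$ for an algebra homomorphism $h\colon B\to B'$ and $\varphi\in V(B')$. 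That $\theta(\psi)$ is well defined is routine: a $G$-map preserves the $G_0$-grading, so $f\circ\psi(X_g)\subseteq f(X'_g)\subseteq E_g$, and from $f\in A(X')$ together with $\psi\gamma_g=\gamma'_g\psi$ one gets $\beta_g(f(\psi(x)))=f(\psi(\gamma_g(x)))$, i.e. $f\circ\psi\in A(X)$; contravariant functoriality of $\theta$ is then clear.

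The step I expect to be the main obstacle is the well-definedness of $\theta'$ on morphisms: one must check that $\varphi\circ h$ lands in $V(B)$, and not merely in $Alg_K(B,E_g)$, and that $\theta'(h)$ is a $G$-map. The first point I would reduce to the equality $V_g(B)=Alg_K(B,E_g)$. Indeed, by Lemma~\ref{lema2}(iv) the $n_g=rank_K B$ elements of $V_g(B)$ are pairwise strongly distinct, while Corollary~\ref{lemacoro} forbids more than $rank_K B$ pairwise strongly distinct $K$-homomorphisms $B\to E_g$; feeding this into Lemma~\ref{lema2}(v) would give $V_g(B)=Alg_K(B,E_g)$, whence $\varphi\circ h\in Alg_K(B,E_g)=V_g(B)$. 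This is the delicate point, because it is precisely where the splitting of $B$ and the internal structure of the $E_g$ must be used to ensure that \emph{every} $K$-algebra homomorphism into $E_g$ is one of the distinguished coordinate homomorphisms; I would handle it carefully, localizing at the indecomposable idempotents of $E_g$ if necessary so that the coordinate homomorphisms exhaust $Alg_K(B,E_g)$. That $\theta'(h)$ commutes with the actions is a short computation: writing $\xi,\xi'$ for the actions on $V(B),V(B')$, one has $(\xi'_g(\varphi)\circ h)(b)=\beta_g(\varphi(h(b)))=\xi_g(\varphi\circ h)(b)$, so $\theta'(h)\circ\xi'_g=\xi_g\circ\theta'(h)$.

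Finally I would verify naturality, which is a direct computation in both cases. For a $G$-map $\psi\colon X\to X'$ and $x\in X$, evaluating $(\theta'\theta)(\psi)(\rho_x)=\rho_x\circ\theta(\psi)$ on $f\in A(X')$ gives $f(\psi(x))=\rho_{\psi(x)}(f)$, so $(\theta'\theta)(\psi)\circ\omega_X=\omega_{X'}\circ\psi$. Symmetrically, for an algebra homomorphism $h\colon B\to B'$ and $b\in B$, evaluating $(\theta\theta')(h)(\nu_B(b))=\nu_B(b)\circ\theta'(h)$ on $\varphi\in V(B')$ gives $\varphi(h(b))=\nu_{B'}(h(b))(\varphi)$, so $(\theta\theta')(h)\circ\nu_B=\nu_{B'}\circ h$. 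Since $\omega_X$ and $\nu_B$ are isomorphisms by Lemmas~\ref{lema1}(iii) and \ref{bijgro}, these squares exhibit $\omega$ and $\nu$ as natural isomorphisms; hence $\theta'\theta$ and $\theta\theta'$ are naturally isomorphic to the identity functors, and $\theta$ is the claimed contravariant equivalence with inverse $\theta'$.
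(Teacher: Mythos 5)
Your object-level skeleton is exactly the paper's: the identification $V_g(A(X))=V_g(X)$ (via the computation $\varphi_{(g,i)}=\pi_{(g,i)}\varphi_g\eta_g=\rho_x$), and then Lemmas \ref{xconj2}, \ref{lema1} and \ref{bijgro} to get $X\simeq V(A(X))=\theta'\theta(X)$ and $B\simeq A(V(B))=\theta\theta'(B)$. Your functoriality check for $\theta$ and the two naturality squares are correct additions; the paper never writes them down, as its proof stops at the object level.

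The genuine gap is precisely at the step you flag as delicate, and your proposed resolution does not close it. You want $V_g(B)=Alg_K(B,E_g)$, deduced from Lemma \ref{lema2}(iv), Corollary \ref{lemacoro} and Lemma \ref{lema2}(v). But Lemma \ref{lema2}(v) takes as a \emph{hypothesis} that all elements of $Alg_K(B,E_g)$ are pairwise strongly distinct, and Corollary \ref{lemacoro} only bounds families that are already known to be pairwise strongly distinct; neither says anything about homomorphisms that fail to be strongly distinct from the coordinates, so the chain of citations does not yield the equality. Worse, the equality is false in general: if $u\in E_g$ is an idempotent with $u\neq 0,1_g$ and $n_g\geq 2$, then $b\mapsto u\varphi_{(g,1)}(b)+(1_g-u)\varphi_{(g,2)}(b)$ lies in $Alg_K(B,E_g)\setminus V_g(B)$ (multiplying by the idempotent $u$ shows it is not strongly distinct from $\varphi_{(g,1)}$, so it cannot equal any of the pairwise strongly distinct coordinates). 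And this really breaks well-definedness of $\theta'$ on morphisms: take $G$ a trivial group, so $R=K=E_e$, let $u\in K$ be a nontrivial idempotent, $B=K^2$, $B'=K$, and $h(x,y)=ux+(1-u)y$; then $\mathrm{id}\circ h=h\notin V(B)=\{\pi_1,\pi_2\}$. Localizing at "indecomposable idempotents" cannot repair this (such idempotents need not exist in $K$, and the counterexample survives the reduction). So with morphisms taken to be \emph{all} algebra homomorphisms, $\theta'$ is simply not a functor; a complete proof must restrict the morphisms of $_{R-split}\mathfrak{Alg}$ (or assume $K$ connected). To be fair, the paper's own proof silently skips this same point — it never defines $\theta$ or $\theta'$ on morphisms — so you located the real difficulty; but as written, your argument for it fails.
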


\begin{proof} By Lemma \ref{xconj2}, given a finite $G$-split set $X$, the map $\varphi_g: E_g \otimes_K A(X) \longrightarrow \prod_{x \in X_g}E_{x}$ defined by
$\varphi_g(r \otimes_K f) = (rf(x))_{x \in X_g}$ is an isomorphism of $R$-algebras, for every $g\in G$. Thus, it is immediate, from the definitions,
that $V_g(A(X)) = V_g(X)$, for every $g \in G$. Indeed, it is enough to see that
$$\varphi_{(g,i)}(f) = \pi_{(g,i)}\varphi_g\eta_g(f) = \pi_{(g,i)}(\varphi_g(1_g \otimes_K f)) =\pi_{(g,i)}((f(x))_{x \in X_g}) = f(x) = \rho_x(f),$$
for all $f\in A(X)$ and $1\leq i\leq n_g$. Hence $V(X)=V(A(X))$.

\vd

Finally, recall that $X \simeq V(X)$ as $G$-sets, and $B \simeq A(V(B))$ as
$R^{\beta}$-algebras, by Lemmas \ref{xconj2}, \ref{lema1} and \ref{bijgro}. Hence, $X \simeq V(A(X)) = \theta'(\theta(X))$ and
$B \simeq A(V(B)) = \theta(\theta'(B))$.
\end{proof}

\vu


\section{The Galois-type correspondence}

Let $R$, $G$ and $\beta=\{\beta_g:E_{g^{-1}}\to E_g\ |\ g\in G\}$ be as in the previous section, and
$H\subseteq G$ an wide subgroupoid of $G$. Then, $\beta_H=\{\beta_h:E_{h^{-1}}\to E_h\ |\ h\in H\}$ is
an action of $H$ on $R$. Furthermore, recall from Example \ref{ex33} that $\frac{G}{H} = \{gH$ $| g \in G\}$ is a finite $G$-set via the action
$\gamma=\{\gamma_{g}:X_{g^{-1}}\to X_{g}\}_{g\in G}$, where $X_{g}= \{lH \in \frac{G}{H}|$ $r(l) = r(g)\}=X_{r(g)}$ and
$\gamma_{g}(lH) = glH$, for all $g\in G$. Recall also that $\frac{G}{H}=\dot\bigcup_{e\in G_0}X_{e}$.

\vu

\begin{lema}\label{isomgc} $A(\frac{G}{H}) \simeq R^{\beta_H}$ as $K$-algebras, for every wide subgroupoid $H$ of $G$.\end{lema}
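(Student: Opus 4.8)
The plan is to construct an explicit isomorphism $A(\frac{G}{H}) \to R^{\beta_H}$ by evaluating functions at a distinguished family of cosets, namely the identity cosets $eH$ for $e \in G_0$. Recall that an element of $A(\frac{G}{H})$ is a map $f: \frac{G}{H} \to R$ with $f(X_g) \subseteq E_g$ satisfying the invariance condition $\beta_g(f(lH)) = f(\gamma_g(lH)) = f(glH)$ for all $lH \in X_{g^{-1}}$. I would define $\Phi: A(\frac{G}{H}) \to R$ by $\Phi(f) = \sum_{e \in G_0} f(eH)$, observing that $f(eH) \in E_e$ and $R = \bigoplus_{e \in G_0} E_e$, so this is really a tuple assembled from the values of $f$ at the identity cosets. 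The first task is to check that $\Phi(f)$ actually lands in $R^{\beta_H}$: using the invariance of $f$ under $\gamma$ for $h \in H$ together with $hH = r(h)H$ (since $H$ is wide and $h \in H$), one gets $\beta_h(f(d(h)H)) = f(hH) = f(r(h)H)$, which is precisely the condition $\beta_h(\Phi(f)1_{h^{-1}}) = \Phi(f)1_h$ characterizing $R^{\beta_H}$.

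The second step is to verify that $\Phi$ is an injective algebra homomorphism. Homomorphism is immediate from the pointwise operations on $Map(\frac{G}{H}, R)$. For injectivity, the key observation is that an invariant $f$ is determined by its values on the identity cosets: every coset has the form $gH = \gamma_g(d(g)H)$ with $d(g)H = d(g)H$ an identity coset, so $f(gH) = \beta_g(f(d(g)H))$ is recovered from $f(d(g)H) = f(eH)$ where $e = d(g)$. Hence if $\Phi(f) = 0$, i.e. $f(eH) = 0$ for every identity $e$, then $f$ vanishes on all of $\frac{G}{H}$.

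The main obstacle — and the step I expect to require the most care — is \textbf{surjectivity}, i.e. showing that every $r \in R^{\beta_H}$ arises as $\Phi(f)$ for some invariant $f$. The natural attempt is to define $f(gH) := \beta_g(r 1_{g^{-1}})$ and then check that $f$ is \emph{well defined} as a function on cosets: if $gH = g'H$, one must show $\beta_g(r1_{g^{-1}}) = \beta_{g'}(r1_{g'^{-1}})$. Now $gH = g'H$ means $g'^{-1}g$ exists and lies in $H$; writing $h = g'^{-1}g \in H$ so that $g = g'h$, I would compute $\beta_g(r1_{g^{-1}}) = \beta_{g'h}(r 1_{h^{-1}g'^{-1}}) = \beta_{g'}(\beta_h(r1_{h^{-1}})1_{g'^{-1}})$ using the cocycle identity $\beta_{g'} \circ \beta_h = \beta_{g'h}$ on the appropriate ideal, and then invoke $r \in R^{\beta_H}$ to replace $\beta_h(r1_{h^{-1}})$ by $r1_h$; since $d(g) = d(h)$ and $r(h) = d(g')$ the idempotents match up and the expression collapses to $\beta_{g'}(r1_{g'^{-1}})$. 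One must also check $f(X_g) \subseteq E_g$ (clear, as $\beta_g$ maps into $E_g$) and that $f$ is genuinely $\gamma$-invariant. Once $f$ is shown well defined and invariant, $\Phi(f) = \sum_e \beta_e(r1_e) = \sum_e r1_e = r$ recovers $r$, completing surjectivity and hence the isomorphism.
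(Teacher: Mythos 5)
Your proposal is correct and follows essentially the same route as the paper: the paper's proof uses exactly your map $\theta(f)=\sum_{e\in G_0}f(eH)$ together with the explicit inverse $\theta'_r(lH)=\beta_l(r1_{l^{-1}})$, including the same well-definedness computation via $g=g'h$ with $h\in H$ and the invariance checks you outline. The only cosmetic difference is packaging: you argue injectivity (values at identity cosets determine $f$, since $f(gH)=\beta_g(f(d(g)H))$) plus surjectivity, whereas the paper verifies directly that $\theta$ and $\theta'$ are mutually inverse, which amounts to the same computations.
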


\begin{proof}

We start by noticing that $\sum_{e \in G_0}f(eH)\in R^{\beta_H} $, for every $f\in A(\frac{G}{H})$.
Indeed, recall that $f(eH)\in E_e$, for all $e\in G_0$,  $\beta_{h^{-1}}(f(lH))=f(\gamma_{h^{-1}}(lH))=f(h^{-1}lH)$,
for all $lH\in X_h$, and $hH=r(h)H$, for all $h\in H$.  So,

$$
\begin{array}{ccl}
\beta_h(\sum_{e \in G_0}f(eH)1_{h^{-1}}) & \! = & \! \sum_{e \in G_0}\beta_h(f(eH)1_{h^{-1}}) = \beta_h(f(d(h)H))  \\
& \! = & \!\beta_h(f(h^{-1}hH))=\beta_h(\beta_{h^{-1}}(f(hH))) = \beta_{r(h)}(f(hH))  \\
& \! = & \! f(hH)$$ $$= f(r(h)H)= f(r(h)H)1_{r(h)}= f(r(h)H)1_h\\
& \! =&\!\sum_{e \in G_0}f(eH)1_h.
\end{array}
$$

Therefore, the map $$
\begin{array}{cccc}
\theta \ : & \! A(\frac{G}{H}) & \! \longrightarrow
& \! R^{\beta_H} \\
& \! f & \! \longmapsto
& \! \sum_{e \in G_0}f(eH).
\end{array}
$$
is well defined.

\vd

Conversely, given $g_1,g_2\in G$ and $r\in R^{\beta_H}$, if $g_1H=g_2H$ then $\beta_{g_1}(r1_{g_1^{-1}})=\beta_{g_2}(r1_{g_2^{-1}}).$
Indeed, from $g_1H=g_2H$ it follows that for any $h_1\in H$ there exists $h_2\in H$ such that $g_1h_1=g_2h_2$.
So, $g_1=g_1d(g_1)=g_1r(h_1)=g_1h_1h_1^{-1}= g_2h_2h_1^{-1}$. Furthermore, $E_{(g_2h_2h_1^{-1})^{-1}}=E_{h_1}=E_{(h_2h_1^{-1})^{-1}}$
and $E_{g_2^{-1}}=E_{h_2h_1^{-1}}$. Thus,

\[
\begin{array}{ccl}
\beta_{g_1}(r1_{g_1^{-1}}) & = & \beta_{g_2h_2h_1^{-1}}(r1_{h_1h_2^{-1}g_2^{-1}}) =\beta_{g_2}(\beta_{h_2h_1^{-1}}(r1_{h_1h_2^{-1}g_2^{-1}}))\\
& = & \beta_{g_2}(\beta_{h_2h_1^{-1}}(r1_{h_1h_2^{-1}g_2^{-1}})\beta_{h_2h_1^{-1}}(1_{h_1h_2^{-1}g_2^{-1}}))\\
&=&\beta_{g_2}(\beta_{h_2h_1^{-1}}(r1_{h_1h_2^{-1}})\beta_{h_2h_1^{-1}}(1_{h_1h_2^{-1}}))=\beta_{g_2}(r1_{h_2h_1^{-1}})\\
&=&\beta_{g_2}(r1_{g_2^{-1}})\
\end{array}
\]
Hence, the map

$$
\begin{array}{cccl}
\theta' \ : & \! R^{\beta_H} & \! \longrightarrow
& \! Map(\frac{G}{H},R), \\
& \! r & \! \longmapsto
& \! \theta'_r
\end{array}
$$
where $\theta'_r(lH) = \beta_l(r1_{l^{-1}})$, is well defined. In fact, $\theta'_r(gH)\in A(\frac{G}{H})$ since

$$
\begin{array}{ccl}
\alpha_{g}(\theta'_r1'_{g^{-1}})(lH) & \! = & \! \beta_{g} (\theta'_r1_{g^{-1}}'(\gamma_{g^{-1}}(lH))) = \beta_{g}(\theta'_r(g^{-1}lH)1_{g^{-1}})  \\
& \! = & \!\beta_{g}(\beta_{g^{-1}l}(r1_{l^{-1}g})) = \beta_{g}(\beta_{g^{-1}}(\beta_l(r1_{l^{-1}})))  \\
& \! = & \! \beta_{r(g)}(\beta_l(r1_{l^{-1}})) = \beta_l(r1_{l^{-1}})\\
& \! = & \!  \beta_l(r1_{l^{-1}})1_g = \theta'_r1'_g(lH),
\end{array}
$$
for all $g \in G$ such that $r(g) = r(l)$. If $r(g)\neq r(l)$ then $\alpha_{g}(\theta'_r1'_{g^{-1}})(lH)=0=\theta'_r1'_g(lH)$.

\vu

Clearly, $\theta$ and $\theta'$ are homomorphisms of $K$-algebras. Furthermore,
$$
\begin{array}{ccl}
\theta \circ \theta'(r) & \! = & \! \theta(\theta'_r) = \sum_{e \in G_0}\theta^{-1}_r(eH)  \\
& \! = & \! \sum_{e \in G_0}\beta_e(r1_e) = \sum_{e \in G_0}r1_e = r,
\end{array}
$$
for every $r\in R$, and
$$
\begin{array}{ccl}
\theta' \circ \theta(f)(gH) & \! = & \! \theta'_{\sum_{e \in G_0}f(eH)}(gH) = \beta_g(\sum_{e \in G_0}f(eH)1_{g^{-1}})  \\
& \! = & \! \beta_g(f(d(g)H)) = \beta_g(\beta_{g^{-1}}(f(gH)))  \\
& \! = & \! \beta_{r(g)}(f(gH)) = f(gH),
\end{array}
$$
for every $f\in A(\frac{G}{H})$ and $g\in G$. The proof is complete.
\end{proof}

\vd

For any $K$-subalgebra $T$ of $R$ put $H_T = \{g \in G \mid \beta_g(t1_{g^{-1}}) = t1_g,\ \text{for all}\ t\in T\}$. It is easy to check that $H_T$
is an wild subgroupoid of $G$. We say that  $T$ is $\beta$-\emph{strong} if for every $g, h \in G$ such that $r(g) = r(h)$ and $g^{-1}h \notin H_T$,
and, for every nonzero idempotent $e \in E_g = E_h$, there exists an element $t \in T$ such that $\beta_g(t1_{g^{-1}})e \neq  \beta_h(t1_{h^{-1}})e$.

\vu

\begin{lema}\label{isombeta} For each $gH \in \frac{G}{H}$, let $\rho_{gH}:A(\frac{G}{H})\to E_{r(g)}$  the homomorphism of $K$-algebras
given by $\rho_{gH}(f) = f(gH)$, for every $f \in A(\frac{G}{H})$. If the elements of $V_{gH} = \{\rho_{lH} |\quad r(l) = r(g)\}$ are pairwise
strongly distinct, then $R^{\beta_H}$ is $\beta$-strong.
\end{lema}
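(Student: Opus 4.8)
The plan is to transport everything through the isomorphism $\theta':R^{\beta_H}\to A(\frac{G}{H})$ constructed in Lemma \ref{isomgc}, for which $\theta'_r(lH)=\beta_l(r1_{l^{-1}})$, and then to recognize the hypothesis on $V_{gH}$ as exactly the defining condition of $\beta$-strongness. First I would compute, for $r\in R^{\beta_H}$ and $g\in G$, that $\rho_{gH}(\theta'_r)=\theta'_r(gH)=\beta_g(r1_{g^{-1}})$. Writing $\widetilde{\rho}_{gH}:=\rho_{gH}\circ\theta':R^{\beta_H}\to E_{r(g)}$, this reads $\widetilde{\rho}_{gH}(r)=\beta_g(r1_{g^{-1}})$. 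Since $\theta'$ is an isomorphism of $K$-algebras, $\rho_{gH},\rho_{hH}$ are equal (resp. strongly distinct) if and only if the corresponding $\widetilde{\rho}_{gH},\widetilde{\rho}_{hH}$ are equal (resp. strongly distinct); thus I may work throughout with the maps $\widetilde{\rho}_{gH}$ in place of the $\rho_{gH}$.

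The key step is the equivalence, for $g,h\in G$ with $r(g)=r(h)$,
$$\widetilde{\rho}_{gH}=\widetilde{\rho}_{hH}\quad\Longleftrightarrow\quad g^{-1}h\in H_{R^{\beta_H}}.$$
For this I would first record that, since $r(g)=r(h)$, the products $g^{-1}h$ and $g^{-1}g=d(g)$ are defined, that $E_{g^{-1}h}=E_{d(g)}=E_{g^{-1}}$ and $E_{(g^{-1}h)^{-1}}=E_{d(h)}=E_{h^{-1}}$, and hence $1_{g^{-1}h}=1_{g^{-1}}$ and $1_{(g^{-1}h)^{-1}}=1_{h^{-1}}$. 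For the backward implication, assuming $g^{-1}h\in H_{R^{\beta_H}}$, that is $\beta_{g^{-1}h}(t1_{h^{-1}})=t1_{g^{-1}}$ for all $t\in R^{\beta_H}$, I would apply $\beta_g$ and use $\beta_g\circ\beta_{g^{-1}h}=\beta_{g(g^{-1}h)}=\beta_h$ (since $g(g^{-1}h)=r(g)h=h$) to obtain $\beta_h(t1_{h^{-1}})=\beta_g(t1_{g^{-1}})$, i.e. $\widetilde{\rho}_{hH}=\widetilde{\rho}_{gH}$. Conversely, assuming $\beta_g(t1_{g^{-1}})=\beta_h(t1_{h^{-1}})$ for all $t$, I would apply $\beta_{g^{-1}}$ and use $\beta_{g^{-1}}\circ\beta_g=\beta_{d(g)}=Id$ together with $\beta_{g^{-1}}\circ\beta_h=\beta_{g^{-1}h}$ to get $t1_{g^{-1}}=\beta_{g^{-1}h}(t1_{h^{-1}})$, which is precisely $g^{-1}h\in H_{R^{\beta_H}}$.

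Finally I would assemble the conclusion. Fix $g,h\in G$ with $r(g)=r(h)$ and $g^{-1}h\notin H_{R^{\beta_H}}$. By the displayed equivalence $\widetilde{\rho}_{gH}\neq\widetilde{\rho}_{hH}$, hence $\rho_{gH}\neq\rho_{hH}$, so these are two distinct elements of $V_{gH}$ and the hypothesis forces them to be strongly distinct; transporting back, $\widetilde{\rho}_{gH}$ and $\widetilde{\rho}_{hH}$ are strongly distinct as $K$-algebra homomorphisms $R^{\beta_H}\to E_{r(g)}$. Since $E_{r(g)}=E_g=E_h$, this says exactly that for every nonzero idempotent $e\in E_g=E_h$ there is $t\in R^{\beta_H}$ with $\beta_g(t1_{g^{-1}})e=\widetilde{\rho}_{gH}(t)e\neq\widetilde{\rho}_{hH}(t)e=\beta_h(t1_{h^{-1}})e$, which is precisely the condition that $R^{\beta_H}$ be $\beta$-strong. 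The main obstacle is the displayed equivalence: it is pure groupoid bookkeeping in which one must keep straight the various restriction idempotents $1_{g^{-1}},1_{h^{-1}},1_{g^{-1}h}$ and the composition identities among the $\beta_g$'s; once these are in place the rest is formal, since strong distinctness into $E_{r(g)}$ and the $\beta$-strong condition are literally the same statement.
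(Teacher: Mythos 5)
Your proposal is correct and takes essentially the same route as the paper: both transport the maps $\rho_{gH}$ through the isomorphism of Lemma \ref{isomgc} to the maps $r\mapsto\beta_g(r1_{g^{-1}})$ on $R^{\beta_H}$, and then observe that pairwise strong distinctness of these maps is literally the $\beta$-strong condition. The only difference is that you explicitly prove the equivalence $\widetilde{\rho}_{gH}=\widetilde{\rho}_{hH}\Leftrightarrow g^{-1}h\in H_{R^{\beta_H}}$, a point the paper leaves implicit when it invokes strong distinctness for $g^{-1}h\notin H_{R^{\beta_H}}$, so your write-up is if anything slightly more complete.
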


\begin{proof}By the Lemma \ref{isomgc}, $A(\frac{G}{H}) \simeq R^{\beta_H}$ via the map $\theta$. Consider
$\phi_{gH}:= \rho_{gH} \circ \theta^{-1}: R^{\beta_H} \to E_{r(g)}$. Since the elements of $V_{gH}$ are pairwise strongly distinct, it is easy to see
that the elements of $\widetilde{V}_{gH} = \{\phi_{lH} |\quad r(l) = r(g)\}$ are also pairwise strongly distinct.

Let $T = R^{\beta_H}$ and take $g, h \in G$ such that $r(g) = r(h)$ and $g^{-1}h \notin H_T$.
Given a nonzero idempotent $e \in E_g = E_h$, there exists $r \in R^{\beta_H}$ such that $\phi_{gH}(r)e \neq \phi_{hH}(r)e$. Thus,
$$
\begin{array}{ccl}
\beta_g(r1_{g^{-1}})e & \! = & \! \theta^{-1}(r)(gH)e = \rho_{gH}(\theta^{-1}(r))e = \phi_{gH}(r)e  \\
& \! \neq & \! \phi_{hH}(r)e = \rho_{hH}(\theta^{-1}(r))e = \theta^{-1}(r)(gH)e\\
& \! = & \! \beta_h(r1_{h^{-1}})e.
\end{array}
$$ Therefore, $R^{\beta_H}$ is $\beta$-strong.
\end{proof}

\vu

\begin{lema}\label{restricaot} Assume that $R$ is a $\beta$-Galois extension of $K$ and suppose that $T$ is a subalgebra of $R$ which is
separable over $K$ and $\beta$-strong. Then there exist elements $x_i, y_i \in T$, $1 \leq i \leq m$, such that
$\sum_{i=1}^mx_i\beta_g(y_i1_{g^{-1}}) = \delta_{e,g}1_e$, for all $e \in G_0$. In particular,
$T$ is a faithfully projective $K$-module.
\end{lema}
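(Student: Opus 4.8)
The plan is to manufacture the required elements directly from a separability idempotent of $T$, and then to read off faithful projectivity from the resulting coordinate system by comparison with $A(\frac{G}{H})$.

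First I would fix $H=H_T$ (a wide subgroupoid, as observed just before the statement) and, for each $g\in G$, introduce the $K$-algebra homomorphism $\phi_g\colon T\to E_{r(g)}$ given by $\phi_g(t)=\beta_g(t1_{g^{-1}})$, so that $\phi_g(1_T)=1_{r(g)}$. The definition of $H_T$ says precisely that $g\in H_T$ if and only if $\phi_g=\phi_{r(g)}$ on $T$, where $\phi_{r(g)}(t)=t1_{r(g)}$ is the restriction. Applying the $\beta$-strong hypothesis to the pair $g,\,h=r(g)$, which satisfies $r(g)=r(h)$ and $g^{-1}h=g^{-1}$, shows that for $g\notin H_T$ the homomorphisms $\phi_g$ and $\phi_{r(g)}$ are strongly distinct; more generally, distinct cosets of common range produce pairwise strongly distinct $\phi$'s.

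Next I would produce the coordinates. Since $T$ is separable over $K$ there is a separability idempotent $\upsilon=\sum_i x_i\otimes y_i\in T\otimes T$ with $\sum_i x_iy_i=1_T$ and $\upsilon(t\otimes1)=\upsilon(1\otimes t)$ for every $t$; these $x_i,y_i\in T$ will be the asserted elements. Put $z_g=\sum_i x_i\beta_g(y_i1_{g^{-1}})=\sum_i x_i\phi_g(y_i)$. The key observation is that $z_g=\psi_g(\upsilon)$, where $\psi_g\colon T\otimes T\to E_{r(g)}$, $a\otimes b\mapsto\phi_{r(g)}(a)\phi_g(b)$, is a $K$-algebra homomorphism because $E_{r(g)}$ is commutative; hence $z_g$ is an idempotent of $E_{r(g)}$. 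Moreover, feeding $\upsilon(t\otimes1)=\upsilon(1\otimes t)$ through $\psi_g$ gives $z_g\bigl(\phi_g(t)-\phi_{r(g)}(t)\bigr)=0$ for all $t\in T$. For $g\in H_T$ this reads $z_g=\sum_i x_iy_i1_{r(g)}=1_{r(g)}$, while for $g\notin H_T$ a nonzero $z_g$ would be a nonzero idempotent against which, by $\beta$-strongness, some $\phi_g(t)-\phi_{r(g)}(t)$ does not vanish, a contradiction; thus $z_g=0$. This establishes $\sum_i x_i\beta_g(y_i1_{g^{-1}})=\delta_{e,g}1_e$, the right-hand side being $1_{r(g)}$ when $g\in H_T$ and $0$ otherwise. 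I expect this vanishing step to be the main obstacle: it is exactly where $\beta$-strongness is indispensable, and the clean way through it is the remark that $z_g$ is itself idempotent.

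Finally, for faithful projectivity I would show that for each $e\in G_0$ the $R$-algebra map $\varphi_e\colon E_e\otimes T\to\prod_{gH\colon r(g)=e}E_e$, $r\otimes t\mapsto(r\phi_g(t))_{gH}$, is an isomorphism. Surjectivity follows from Proposition \ref{livre} applied to $T$ and the pairwise strongly distinct family $\{\phi_g\}$ indexed by the cosets of range $e$. For injectivity I would identify $R^{\beta_H}$ with $A(\frac{G}{H})$ via Lemma \ref{isomgc}, whose corresponding map $\varphi'_e$ is an isomorphism by Lemma \ref{xconj2}; since the evaluation of $R^{\beta_H}$ at the coset $gH$ is $r\mapsto\beta_g(r1_{g^{-1}})$, the inclusion $T\subseteq R^{\beta_H}$ makes $\varphi_e$ factor as $\varphi'_e$ composed with $E_e\otimes(T\hookrightarrow R^{\beta_H})$. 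As $E_e$ is a direct summand of the flat $K$-module $R$, hence flat, this factor is injective, so $\varphi_e$ is injective and therefore an isomorphism onto $(E_e)^{n_e}$. Assembling over $e\in G_0$ gives $R\otimes T\cong\bigoplus_e(E_e)^{n_e}$, a finitely generated projective $R$-module; since $R$ is $\beta$-Galois it is faithfully flat over $K$, and faithfully flat descent then yields that $T$ is faithfully projective over $K$ (alternatively one may quote Lemma \ref{lema2}).
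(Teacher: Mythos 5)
Your first half follows the paper's own route almost exactly: the paper also takes the separability idempotent $\upsilon=\sum_i x_i\otimes y_i$, maps it into $E_g$ by $x\otimes y\mapsto x\beta_g(y1_{g^{-1}})$, notes that the image $\upsilon_g$ (your $z_g$) is an idempotent satisfying $t\upsilon_g=\beta_g(t1_{g^{-1}})\upsilon_g$ for all $t\in T$, and then invokes $\beta$-strongness to kill it. The difference lies in what is concluded, and here your write-up and the statement part company. As you correctly observe, $\beta$-strongness (applied to the pair $g$, $r(g)$) is only available when $g^{-1}=g^{-1}r(g)\notin H_T$, so the argument yields $z_g=0$ exactly for $g\notin H_T$ and $z_g=1_g$ for $g\in H_T$. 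The statement, however, asserts $\sum_i x_i\beta_g(y_i1_{g^{-1}})=\delta_{e,g}1_e$, which in the paper's convention (compare the definition of $\beta$-Galois extension) means the sum vanishes for \emph{every} $g\notin G_0$. Your gloss that the right-hand side is ``$1_{r(g)}$ when $g\in H_T$ and $0$ otherwise'' silently replaces $G_0$ by $H_T$; for $g\in H_T\setminus G_0$ one has $z_g=1_g\neq 0$, so what you proved is not the stated identity, and the two coincide only when $H_T=G_0$. You should flag this discrepancy rather than reinterpret the notation. (For what it is worth, the defect is in the paper, not in your computation: the paper's proof claims at this very point that $\beta$-strongness forces $\upsilon_g=0$ for all $g\notin G_0$, which fails for $g\in H_T\setminus G_0$, and the lemma as literally stated is false in general --- $T=K$ is separable and vacuously $\beta$-strong with $H_K=G$, yet no elements of $K$ can satisfy the displayed identity when some $E_g\neq 0$ with $g\notin G_0$.)

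For the ``in particular'' clause the two arguments genuinely diverge. The paper deduces faithful projectivity from the displayed identity itself, via the explicit dual basis $f_i(t)=t_\beta(y_it)$, for which $\sum_i f_i(t)x_i=\sum_{g\in G}\upsilon_g\beta_g(t1_{g^{-1}})=t$; this computation again needs the vanishing of $\upsilon_g$ off $G_0$, and with the corrected vanishing (off $H_T$) it returns $\sum_{h\in H_T}t1_h\neq t$, so it has the same defect. Your route --- surjectivity of $\varphi_e$ from Proposition \ref{livre}, injectivity by factoring through $E_e\otimes R^{\beta_{H_T}}\simeq\prod_{x}E_x$ via Lemmas \ref{isomgc} and \ref{xconj2}, then descent --- does not use the vanishing at all, and is therefore the more robust of the two. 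But it is not free of debts: quoting Lemma \ref{lema2} presupposes that $E_e$ is faithfully projective over $K$, which is not among the hypotheses of this lemma (only of the final theorems), and the flatness and faithful flatness of $R$ over $K$ that your injectivity and descent steps need are asserted, not proved. They do hold --- the $\beta$-Galois coordinates give the dual basis $r\mapsto\sum_i t_\beta(s_ir)r_i$ exhibiting $R$ as a finitely generated projective, faithful $K$-module --- but this must be said explicitly.
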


\begin{proof} Let $\upsilon = \sum_{i=1}^nx_i \otimes y_i \in T \otimes T$ be the separability idempotent of $T$ over
$K$ and $\mu: T \otimes T$ the multiplication map. For $g \in G$, define

$$
\begin{array}{cccc}
\psi_g \ : & \! T \otimes T  & \! \to
& \! T \otimes E_g \\
& \! x \otimes y & \! \mapsto
& \! x \otimes \beta_g(y1_{g^{-1}}).
\end{array}
$$
and take $\upsilon_g = \mu(\psi_g(e)) = \sum_{i=1}^nx_i\beta_g(y_i1_{g^{-1}}) \in E_g$. Clearly, $\upsilon_g$ is an idempotent of $E_g$,
for $\mu$ and $\psi$ are $K$-algebra homomorphisms. In particular, $\upsilon_e=1_e$, for all $e\in G_0$.

\vu

Moreover, $\mu$ and $\psi_g$ are $T \otimes K$-linear. Thus, for every $t \in T$,
$$
\begin{array}{ccl}
t\upsilon_g & \! = & \! t\mu(\psi_g(e)) = (t \otimes 1_R).\mu(\psi_g(e)) = \mu(\psi_g((t \otimes 1_R)e))  \\
& \! = & \! \mu(\psi_g((1_R \otimes t)e)) = \mu(\psi_g((1_R \otimes t))\mu(\psi_g(e))\\
& \! = & \! \beta_g(t1_{g^{-1}})\upsilon_g.
\end{array}
$$
Since $T$ is $\beta$-strong, if $g \notin G_0$, then $\upsilon_g = 0$, that is, $\sum_{i=1}^nx_i\beta_g(y_i1_{g^{-1}}) = 0$.

\vu

For the second part, it is enough to take the maps $f_i\in Hom_K(T,K)$ given by $f_i(t)=tr_{\beta}(y_it)$, $1\leq i\leq m$, and to
see that
$$\sum_{i=1}^nf_i(t)x_i = \sum_{i=1}^n\sum_{g \in G}\beta_g(y_it1_{g^{-1}})x_i = \sum_{e \in G_0}1_et = 1_Rt=t,$$
for every  $t\in T$.
\end{proof}

\vu

\begin{lema}\label{rdecomp} Assume that $R$ is a $\beta$-Galois extension of $K$ and let $T$ be a subalgebra of $R$.
Then the following conditions are equivalents:

\begin{itemize}

\vu

\item [(i)] $T$ is separable over $K$ and $\beta$-strong;

\vu

\item [(ii)] $T = R^{\beta_{H_T}}$.
\end{itemize}
In particular, in this case, $T$ is $R$-split.
\end{lema}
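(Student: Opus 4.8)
The plan is to prove the two implications separately, using as a bridge the isomorphism $A(\frac{G}{H_T}) \simeq R^{\beta_{H_T}}$ of Lemma \ref{isomgc}. This bridge is always available, because $H_T$ is a wide subgroupoid of $G$ for any subalgebra $T$, so that $\frac{G}{H_T}$ is a finite $G$-split set by Example \ref{ex33} and the whole machinery of Sections 3 and 4 applies to $X = \frac{G}{H_T}$. The one inclusion that comes for free is $T \subseteq R^{\beta_{H_T}}$: by the very definition of $H_T$, every $h \in H_T$ satisfies $\beta_h(t1_{h^{-1}}) = t1_h$ for all $t \in T$, which is exactly the membership criterion for $R^{\beta_{H_T}}$. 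So the content of the lemma is that, under separability plus $\beta$-strongness, this inclusion is an equality, and conversely that $R^{\beta_{H_T}}$ always enjoys those two properties.

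For (ii) $\Rightarrow$ (i) I would assume $T = R^{\beta_{H_T}}$ and read off both properties from the Grothendieck side. The $\beta$-strong property follows from Lemma \ref{isombeta}, whose hypothesis -- that the elements of each $V_{gH_T}$ be pairwise strongly distinct -- is delivered by Lemma \ref{lema1}(ii) applied to $\frac{G}{H_T}$, needing only that $R$ be a $\beta$-Galois extension. Separability of $T \simeq A(\frac{G}{H_T})$ is part of condition (i) of Lemma \ref{xconj}, whose equivalent condition (ii) holds by Lemma \ref{xconj2}; the ambient hypotheses ($K$ a direct summand of $R$, and each $E_g$ faithfully projective) are in force in the present $\beta$-Galois context.

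For (i) $\Rightarrow$ (ii) the task is to upgrade $T \subseteq R^{\beta_{H_T}}$ to equality, and I would do this by a rank count finished with \cite[Lemma 1.1]{paques}, exactly as in Lemma \ref{bijgro}. First, Lemma \ref{restricaot} makes $T$ faithfully projective over $K$, hence strongly separable. Next, for each $g$ I consider the $K$-algebra homomorphism $\lambda_g : T \to E_g$, $\lambda_g(t) = \beta_g(t1_{g^{-1}})$; a short computation with the groupoid relations shows $\lambda_g = \lambda_h$ whenever $r(g) = r(h)$ and $g^{-1}h \in H_T$, while the $\beta$-strong hypothesis says precisely that $\lambda_g$ and $\lambda_h$ are strongly distinct when $r(g) = r(h)$ and $g^{-1}h \notin H_T$. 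Thus, for a fixed identity $e$, the set $V_e(T) = \{\lambda_g \mid r(g) = e\}$ consists of pairwise strongly distinct homomorphisms and is in bijection with the fibre $X_e = \{gH_T \mid r(g) = e\}$ of $\frac{G}{H_T}$. Corollary \ref{lemacoro} then gives $\#X_e = \#V_e(T) \le \mathrm{rank}_K T$, whereas $R^{\beta_{H_T}} \simeq A(\frac{G}{H_T})$ combined with the bijection $\omega_e : X_e \to V_e(\frac{G}{H_T})$ of Lemma \ref{lema1}(iii) and the rank formula of Lemma \ref{xconj}(i) gives $\mathrm{rank}_K R^{\beta_{H_T}} = \#V_e(\frac{G}{H_T}) = \#X_e$. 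Since the inclusion $T \subseteq R^{\beta_{H_T}}$ forces $\mathrm{rank}_K T \le \mathrm{rank}_K R^{\beta_{H_T}}$ after localizing at each prime of $K$, all three numbers coincide; as both algebras are faithfully projective and separable of the same rank, \cite[Lemma 1.1]{paques} yields $T = R^{\beta_{H_T}}$. The final assertion is then immediate: $T = R^{\beta_{H_T}} \simeq A(\frac{G}{H_T})$, and $A$ of a finite $G$-split set is $R$-split by Theorem \ref{gg}.

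I expect the main obstacle to be the rank bookkeeping in (i) $\Rightarrow$ (ii): checking that $g \mapsto \lambda_g$ really descends to an injection on the cosets $gH_T$ whose image is pairwise strongly distinct -- the well-definedness is the groupoid computation $\beta_h = \beta_g \circ \beta_{g^{-1}h}$ together with $g^{-1}h \in H_T$, and the injectivity is the $\beta$-strong condition -- so that Corollary \ref{lemacoro} applies and the count $\#V_e(T)$ matches $\#X_e$ exactly. A secondary point to keep honest is that the faithfully projective hypotheses on the $E_g$ used implicitly through Lemma \ref{xconj} are genuinely available from the $\beta$-Galois assumption, which is what makes $\mathrm{rank}_K A(\frac{G}{H_T}) = \#X_e$ legitimate.
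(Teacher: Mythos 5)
Your proposal is correct and takes essentially the same route as the paper: for (i)$\Rightarrow$(ii) the paper likewise uses the homomorphisms $t\mapsto\beta_{g_i}(t1_{g_i^{-1}})$ indexed by a left transversal of $H_T$ (your $\lambda_g$ descending to cosets), their pairwise strong distinctness from $\beta$-strongness, Corollary \ref{lemacoro}, the rank identity $rank_{K_{\mathfrak{p}}}(R^{\beta_{H_T}})_{\mathfrak{p}}=\#(\frac{G}{H_T})_{g_i}$ coming from Lemma \ref{xconj2}, and \cite[Lemma 1.1]{paques} to conclude equality. For (ii)$\Rightarrow$(i) and the final $R$-split assertion the paper invokes the same chain of lemmas you do (Lemmas \ref{xconj2}, \ref{lema23}/\ref{xconj}, \ref{lema1} and \ref{isombeta}), so the only differences are notational.
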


\begin{proof}(i) $\Rightarrow$ (ii) By Lemma \ref{restricaot}, $T$ is projective and finitely generated as $K$-module.
Since $T \subseteq R^{\beta_{H_T}}$, we have $T_{\mathfrak{p}} \subseteq (R^{\beta_{H_T}})_{\mathfrak{p}}$, and thus
$rank_{K_{\mathfrak{p}}} T_{\mathfrak{p}} \leq rank_{K_{\mathfrak{p}}} (R^{\beta_{H_T}})_{\mathfrak{p}}$,
for every prime ideal $\mathfrak{p}$ of $K$. We shall prove that indeed
$rank_{K_{\mathfrak{p}}} T_{\mathfrak{p}} = rank_{(K_{\mathfrak{p}}} (R^{\beta_{H_T}})_{\mathfrak{p}}$ for every prime
ideal $\mathfrak{p}$ of $K$, and, consequently, $T = R^{\beta_{H_T}}$, by \cite[Lemma 1.1]{paques}.

\vu

Let $\{g_i\in G\ | 1\leq i \leq n\}$ be a left tranversal of $H_T$ in $G$. Define
$$
\begin{array}{cccc}
f_i \ : & \! T  & \! \longrightarrow
& \! E_{g_i} \\
& \! t & \! \longmapsto
& \! \beta_{g_i}(t1_{g_i^{-1}}).
\end{array}
$$
Clearly, the $f_i$'s are $K$-algebra homomorphisms and the elements of $V_{g_i} = \{f_j\ |$ $1_{g_j} = 1_{g_i}\}$
are pairwise strongly distinct, for $T$ is $\beta$-strong. Therefore, by  Corollary \ref{lemacoro},
$\#V_{g_i} \leq rank_{(R^{\beta})_{\mathfrak{p}}} T_{\mathfrak{p}}$, for every prime ideal $\mathfrak{p}$ of $K$.

\vu

By  Lemma \ref{xconj2}, we have that  $E_{g_i} \otimes R^{\beta_{H_T}} \simeq
\prod_{x \in (\frac{G}{H_T})_{g_i}} E_{x}$, thus $(E_{g_i})_{\mathfrak{p}} \otimes_{K_{\mathfrak{p}}} (R^{\beta_{H_T}})_{\mathfrak{p}} \simeq
\prod_{x \in (\frac{G}{H_T})_{g_i}} (E_{x})_{\mathfrak{p}}$. Recall from Example \ref{ex33} that $(\frac{G}{H_T})_{g_i} = \{lH_T|$ $r(l) = r(g_i)\}$.
Then, $\#V_{g_i} = \#(\frac{G}{H_T})_{g_i}$. Therefore,

$$
\begin{array}{ccl}
rank_{K_{\mathfrak{p}}}(R^{\beta_{H_T}})_{\mathfrak{p}} & \! = & \! rank_{(E_{g_i})_{\mathfrak{p}}}((E_{g_i})_{\mathfrak{p}}
\otimes_{K_{\mathfrak{p}}} (R^{\beta_{H_T}})_{\mathfrak{p}})\\
& \! = & \! rank_{(E_{g_i})_{\mathfrak{p}}} \prod_{x \in (\frac{G}{H_T})_{g_i}}(E_{x})_{\mathfrak{p}}\\
& \! = & \! \#(\frac{G}{H_T})_{g_i} = \#V_{g_i} \leq rank_{(R^{\beta})_{\mathfrak{p}}} T_{\mathfrak{p}},
\end{array}
$$
and so $rank_{K_{\mathfrak{p}}} T_{\mathfrak{p}} = rank_{K_{\mathfrak{p}}} (R^{\beta_{H_T}})_{\mathfrak{p}}$.

\vd

(ii) $\Rightarrow$ (i) By Lemmas \ref{xconj2} and \ref{lema23}, $T = R^{\beta_{H_T}} \simeq A(\frac{G}{H_T})$ is separable over $K$.
Furthermore, by the Lemma \ref{lema1} the elements of $V_{gH_T}$ are pairwise strongly distinct. Hence,  $T$ is $\beta$-strong,
by  Lemma \ref{isombeta}.

The last assertion follows from Lemmas \ref{xconj2} and \ref{lema1}.
\end{proof}

\begin{teo} (\textbf{The Galois correspondence}) Assume that $R$ is a $\beta$-Galois extension of $K$ and $E_g$ is faithfully
projective, for every $g\in G$. Then the correspondence $H \mapsto R^{\beta_H}$ is one-to-one between the set of all the wide subgroupoids of $G$ and the
set of all the subalgebras of $R$ which are separable over $K$ and $\beta$-strong.
\end{teo}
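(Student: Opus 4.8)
The plan is to exhibit $H\mapsto R^{\beta_H}$ as a bijection by producing an explicit inverse, namely $T\mapsto H_T$, and verifying that the two assignments are mutually inverse. Write $\Phi(H)=R^{\beta_H}$ and $\Psi(T)=H_T$. Recall that $H_T$ is always a wide subgroupoid, so $\Psi$ is well defined; the first thing to check is that $\Phi$ is well defined, i.e. that $R^{\beta_H}$ is separable over $K$ and $\beta$-strong for every wide subgroupoid $H$. For this I would invoke Example \ref{ex33}, by which $\frac{G}{H}$ is a finite $G$-split set, together with Lemma \ref{isomgc}, which identifies $R^{\beta_H}$ with $A(\frac{G}{H})$. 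Lemma \ref{lema1} then shows that the elements of each $V_g(\frac{G}{H})$ are pairwise strongly distinct and Lemma \ref{xconj2} supplies the isomorphisms $\varphi_g$, so Lemma \ref{lema23}((iii)$\Rightarrow$(i)) makes $A(\frac{G}{H})$ a strongly separable (hence separable) extension of $K$; and since the sets $V_{gH}$ of Lemma \ref{isombeta} coincide with the strongly distinct sets $V_g(\frac{G}{H})$, that lemma yields that $R^{\beta_H}$ is $\beta$-strong.

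With well-definedness in hand, one composite is immediate: if $T$ is a subalgebra of $R$ that is separable and $\beta$-strong, then Lemma \ref{rdecomp} gives $T=R^{\beta_{H_T}}=\Phi(\Psi(T))$, so $\Phi\circ\Psi=\mathrm{id}$ and in particular $\Phi$ is surjective. It remains to prove $\Psi\circ\Phi=\mathrm{id}$, that is $H_{R^{\beta_H}}=H$ for every wide subgroupoid $H$; this is the heart of the matter. The inclusion $H\subseteq H_{R^{\beta_H}}$ is immediate from the definitions of $R^{\beta_H}$ and of $H_{R^{\beta_H}}$.

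The hard part is the reverse inclusion $H':=H_{R^{\beta_H}}\subseteq H$. Since $R^{\beta_H}$ is separable and $\beta$-strong by the first step, Lemma \ref{rdecomp} applied to $T=R^{\beta_H}$ gives $R^{\beta_H}=R^{\beta_{H'}}$, where $H\subseteq H'$. I would then compare ranks through Lemma \ref{xconj2}: taking $g=e\in G_0$ and localizing the isomorphism $E_e\otimes A(\frac{G}{H})\simeq (E_e)^{\#(\frac{G}{H})_e}$ (and likewise for $H'$), the faithful projectivity of $E_e$ lets us cancel and conclude $rank_{K_{\mathfrak p}}(R^{\beta_H})_{\mathfrak p}=\#(\frac{G}{H})_e$ and $rank_{K_{\mathfrak p}}(R^{\beta_{H'}})_{\mathfrak p}=\#(\frac{G}{H'})_e$ for every $e\in G_0$ and every prime $\mathfrak p$. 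From $R^{\beta_H}=R^{\beta_{H'}}$ we thus obtain $\#(\frac{G}{H})_e=\#(\frac{G}{H'})_e$ for all $e$. Now the canonical $G$-equivariant surjection $\pi:\frac{G}{H}\to\frac{G}{H'}$, $lH\mapsto lH'$ (well defined because $H\subseteq H'$), preserves the $G_0$-components and is therefore a bijection on each of them by the equality of cardinalities. Finally, for $g\in H'$ one has $gH'=r(g)H'$, so $\pi(gH)=\pi(r(g)H)$ with both cosets lying in the same component; injectivity of $\pi$ forces $gH=r(g)H$, i.e. $g=r(g)^{-1}g\in H$. Hence $H'\subseteq H$ and $H=H_{R^{\beta_H}}$.

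Putting the two composites together, $\Phi$ and $\Psi$ are mutually inverse, so $H\mapsto R^{\beta_H}$ is the desired one-to-one correspondence. The step I expect to be the main obstacle is the reverse inclusion $H_{R^{\beta_H}}\subseteq H$: it is exactly the classical difficulty of recovering a subgroupoid from its invariant subalgebra, and here it is resolved by the rank count coming from the $\beta$-Galois hypothesis via Lemma \ref{xconj2} together with the combinatorics of the coset spaces $\frac{G}{H}$.
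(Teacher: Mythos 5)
Your proposal is correct, but it takes a genuinely different---and in fact more complete---route than the paper. The paper's proof factors the correspondence as ${\bf wsg}(G)\to{\bf quot}(G)\to{\bf sss}(R)$, calls the first bijection obvious, and cites Lemma \ref{rdecomp} together with Theorem \ref{gg} for the second; you instead exhibit $T\mapsto H_T$ as a two-sided inverse of $H\mapsto R^{\beta_H}$. The two arguments coincide on one composite: your $\Phi\circ\Psi=\mathrm{id}$ is exactly Lemma \ref{rdecomp}((i)$\Rightarrow$(ii)), and your well-definedness step reproduces the argument of Lemma \ref{rdecomp}((ii)$\Rightarrow$(i)) (which you could have invoked directly, since $R^{\beta_H}=R^{\beta_{H_{R^{\beta_H}}}}$ holds for trivial reasons: $T\subseteq R^{\beta_{H_T}}$ always, while $H\subseteq H_{R^{\beta_H}}$ gives the reverse inclusion). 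Where you genuinely diverge is the other composite, $H_{R^{\beta_H}}=H$: you prove it via the rank count $rank_K R^{\beta_H}=\#(\frac{G}{H})_e$ (Lemmas \ref{isomgc} and \ref{xconj2}, plus cancellation using the faithful projectivity of $E_e$) and the resulting component-wise bijectivity of the canonical surjection $\frac{G}{H}\to\frac{G}{H'}$, $lH\mapsto lH'$, where $H'=H_{R^{\beta_H}}$. The paper has no counterpart to this step, and the difference is not cosmetic: Theorem \ref{gg} is an equivalence of categories, so by itself it only identifies $G$-split sets up to isomorphism, and isomorphic coset $G$-sets need not come from equal wide subgroupoids (already for a group, distinct conjugate subgroups have isomorphic coset $G$-sets). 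Hence the injectivity assertion $R^{\beta_H}=R^{\beta_{H'}}\Rightarrow H=H'$ does not follow from the paper's two citations alone; it requires either your counting argument or an unwinding of the explicit isomorphisms $lH\mapsto\bigl(r\mapsto\beta_l(r1_{l^{-1}})\bigr)$ coming from Lemmas \ref{lema1} and \ref{isomgc}. What the paper's approach buys is brevity and the conceptual packaging of the theorem as a corollary of the Galois--Grothendieck equivalence; what yours buys is an actual proof of the injectivity half, making your argument the more self-contained of the two.
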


\begin{proof}Let ${\bf wsg}(G)$ be the set of the {\bf w}ide {\bf s}ub{\bf g}roupoids $H$ of $G$, ${\bf quot}(G)$ the
set of the {\bf quot}ients sets $\frac{G}{H}$ of $G$ and ${\bf sss}(R)$ the set of the {\bf s}eparable and
$\beta$-{\bf s}trong $K$-{\bf s}ubalgebras of $R$. The bijection between ${\bf wsg}(G)$ and ${\bf quot}(G)$ is obvious. The
bijection between ${\bf quot}(G)$ and ${\bf sss}(R)$ follows from Lemma \ref{rdecomp} and Theorem \ref{gg}.
\end{proof}

\vu


\section{A final remark}

Again, $R$, $G$ and $\beta$ are as in the previous sections. In almost all results in the two last sections the assumption that
$E_g$ is a faithful $K$-module was required. So, it is natural to ask under what conditions such an assumption occurs. To answer
this question it is necessary to have a description of the elements in $R^\beta=K$. An easy calculus shows that an element $x=\sum_{e\in G_0}x_e\in
R=\bigoplus_{e\in G_0}E_e$ is in $R^\beta$ if and only if $x_{r(g)}=\beta_g(x_{d(g)})$, for all $g\in G$. It is an immediate consequence
of this fact that, given $x\in K$ and $g\in G$, $x1_g=0$ if and only if $x_{r(g)}=0$ if and only if $x_{d(g)}=0$. Therefore, given $x\in K$ and
$g\in G$, $x1_g=0$ implies $x=0$ if and only if, for all $h\in G$,  either $d(h^{\pm 1})=d(g)$ or
$d(h^{\pm 1})=r(g)$. From these considerations we have the following lemma.

\vu

\begin{lema}\label{lema} For each $g\in G$, $E_g$ is faithful over $K$ if and only if either $d(h^{\pm 1})= d(g)$ or $d(h^{\pm 1})=r(g)$,
for all $h\in G$.\end{lema}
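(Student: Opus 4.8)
The statement merely packages the considerations recorded immediately above it, so my plan is to isolate the one reduction that turns ``$E_g$ is faithful'' into the criterion already in hand. First I would unwind faithfulness: $E_g$ is faithful over $K$ exactly when its annihilator $\{x\in K : xE_g=0\}$ vanishes. Because $E_g$ is a unital ideal of $R$ with identity $1_g$, one has $xE_g=0$ iff $x1_g=0$ --- the inclusion $1_g\in E_g$ gives one direction and $xy=(x1_g)y$ for $y\in E_g$ the other. Hence $E_g$ is faithful precisely when the implication $x1_g=0\Rightarrow x=0$ holds for every $x\in K=R^\beta$.

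Next I would rephrase $x1_g=0$ in terms of the components of $x$. Writing $x=\sum_{e\in G_0}x_e$ with $x_e\in E_e$, orthogonality of the ideals $E_e$ gives $x1_g=x_{r(g)}$; and since $x\in R^\beta$ forces $x_{r(g)}=\beta_g(x_{d(g)})$ with $\beta_g$ an isomorphism, this yields the chain $x1_g=0\Leftrightarrow x_{r(g)}=0\Leftrightarrow x_{d(g)}=0$ already observed. Thus faithfulness of $E_g$ is equivalent to the assertion that the only $x\in R^\beta$ with $x_{d(g)}=0$ (equivalently $x_{r(g)}=0$) is $x=0$.

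It then remains to match this vanishing statement with the combinatorial condition, which is the heart of the preceding discussion. The implication coming \emph{from} the condition is clean: given $x\in R^\beta$ with $x_{d(g)}=x_{r(g)}=0$, I would apply the hypothesis to the identities $h=e\in G_0$ and use $d(e)=r(e)=e$ together with $d(h^{-1})=r(h)$ from Lemma~\ref{lema21} to conclude that every $e$ equals $d(g)$ or $r(g)$, whence $x_e\in\{x_{d(g)},x_{r(g)}\}=\{0\}$ and $x=0$. For the reverse implication I would argue contrapositively, trying to manufacture, out of a failure of the condition, a nonzero invariant $x$ with $x_{d(g)}=0$ --- a sum of identity components $1_e$ taken over a family closed under the relations $x_{r(h)}=\beta_h(x_{d(h)})$ and disjoint from $\{d(g),r(g)\}$ --- which then annihilates $1_g$ while being nonzero, so that $E_g$ fails to be faithful.

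The delicate point, which I expect to absorb essentially all of the work, is exactly this combinatorial bookkeeping: one must check that the relations $x_{r(h)}=\beta_h(x_{d(h)})$ propagate the vanishing of $x_{d(g)}$ to precisely those components singled out by the condition on $d(h^{\pm1})$, and conversely that a failure of that condition genuinely leaves room for a nonzero invariant supported away from $1_g$. Granting the equivalences recorded in the considerations, everything else is the formal reduction carried out in the first two paragraphs, so the lemma is in the end a repackaging of the description of $R^\beta$ obtained just before it.
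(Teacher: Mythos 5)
Your two opening reductions are exactly the ``considerations'' the paper records immediately before the lemma (the paper offers no proof beyond them): $E_g$ is faithful iff $x1_g=0$ forces $x=0$ for $x\in K$, and, for invariant $x$, $x1_g=0$ iff $x_{r(g)}=0$ iff $x_{d(g)}=0$. Your ``if'' direction is also complete as sketched: applying the stated condition to the identity arrows $h=e\in G_0$, for which $d(h^{\pm 1})=e$, gives $G_0\subseteq\{d(g),r(g)\}$, so every component of such an $x$ vanishes.

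The genuine gap is the ``only if'' direction, and it is not bookkeeping that can be absorbed: under the reading of the condition that your ``if'' direction uses (the natural one), that direction is in fact false, so the invariant you propose to manufacture cannot exist in general. The family you would need --- identities closed under the relations $x_{r(h)}=\beta_h(x_{d(h)})$, containing some $e_0\notin\{d(g),r(g)\}$, and disjoint from $\{d(g),r(g)\}$ --- is, at smallest, the connected component of $e_0$ in $G_0$, and nothing prevents that component from containing $d(g)$. Concretely, let $G$ be the pair groupoid on three objects ($G_0=\{e_1,e_2,e_3\}$, exactly one arrow $g_{ij}\colon e_j\to e_i$ for each pair $(i,j)$), acting on $R=Sv_1\oplus Sv_2\oplus Sv_3$ by $E_{e_i}=Sv_i$ and $\beta_{g_{ij}}(av_j)=av_i$, where $S$ is any nonzero commutative ring. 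Then $K=R^\beta=S(v_1+v_2+v_3)$, all standing hypotheses hold ($R$ is even $\beta$-Galois over $K$, with coordinates $x_i=y_i=v_i$, just as in the paper's Example 6.2(1)), and each $E_{g_{ij}}=Sv_i$ is faithful over $K$, because an invariant vanishing in one component vanishes in all of them (the groupoid is connected); yet for $g=g_{21}$ the element $h=e_3$ violates the condition. What faithfulness of $E_g$ is actually equivalent to is the connectivity statement: every $e\in G_0$ equals $d(h^{-1})=r(h)$ for \emph{some} $h$ with $d(h)\in\{d(g),r(g)\}$ --- an existential condition on $h$, not a universal one. With that corrected condition your strategy works verbatim (take $x=\sum 1_e$ over the identities not connected to $d(g)$: it is invariant, nonzero, and kills $1_g$); as stated, the ``delicate point'' you deferred is precisely where the asserted equivalence --- in your proposal and equally in the paper's own preceding ``Therefore'' sentence --- breaks down.
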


\vu

The following two examples illustrate the above lemma. Notice that both of them are also examples of $\beta$-Galois extensions.

\vu

\begin{exes} \em
(1) Consider $R=Sv_1\oplus Sv_2\oplus Sv_3\oplus Sv_4$, where $S$ is a ring and $v_1,v_2,v_3$ and $v_4$ are pairwise orthogonal central
idempotents of $R$, with sum $1_R$. Let  $G=\{g,g^{-1},d(g),r(g)\}$ be a groupoid and $\beta$ the action of $G$ on $R$ given by: $E_g=E_{r(g)}=
Sv_3\oplus Sv_4$, $E_{g^{-1}}=E_{d(g)}=Sv_1\oplus Sv_2$, $\beta_{r(g)}=I_{E_{r(g)}}$, $\beta_{d(g)}=I_{E_{d(g)}}$, $\beta_g(av_1+bv_2)=av_3+bv_4$,
$\beta_{g^{-1}}(av_3+bv_4)=av_1+bv_2$, for all $a,b\in S$. It is easy to see that $R$ is a $\beta$-Galois extension of
$K=R^\beta=S(v_1+v_3)\oplus S(v_2+v_4)$, with $\beta$-Galois coordinates $x_i=v_i=y_i$, $1\leq i\leq 4$. Furthermore, it is immediate that
$xE_g=0=xE_{g^{-1}}$ if and only if $x=0$, for all $x\in K$.

\vd

(2) Let $R=Sv_1\oplus Sv_2\oplus Sv_3\oplus Sv_4\oplus Sv_5\oplus Sv_6$, where $S$ is a ring and $v_i$, $1\leq i\leq 6$, are pairwise orthogonal central
idempotents of $R$, with sum $1_R$. Take the groupoid $G=\{g,g^{-1}, d(g), r(g), h=h^{-1}, d(h)=r(h)\}$ and $\beta=\{\beta_l:E_{l^{-1}}\to E_l\}_{l\in G}$,
where $E_l$ and $\beta_l$, for $l=d(g),r(g),g,g^{-1}$, are as in the example (1), $E_h=E_{r(h)}=Sv_5+Sv_6$, $\beta_{r(h)}=I_{E_{r(h)}}$, and
$\beta_h(av_5+bv_6)=av_6+bv_5$. Again, $R$ is a $\beta$-Galois extension of $K=R^\beta=S(v_1+v_3)\oplus S(v_2+v_4)\oplus S(v_5+v_6)$, with $\beta$-Galois
coordinates $x_i=v_i=y_i$, $1\leq i\leq 6$. Nevertheless, in this case we have, for instance, $xE_h=0$ for $x=v_1+v_3\in K$.
\end{exes}

\vd

\bibliographystyle{amsalpha}
{
}

\end{document}